\newcommand{\ack}{\section*{Acknowledgments}}
\newcommand{\be}{\begin{equation}}
\newcommand{\ee}{\end{equation}}
\newcommand{\bea}{\begin{eqnarray}}
\newcommand{\eea}{\end{eqnarray}}
\newcommand{\bean}{\begin{eqnarray*}}
\newcommand{\eean}{\end{eqnarray*}}
\newcommand{\xa}{\alpha}
\newcommand{\xb}{\beta}
\newcommand{\xg}{\gamma}
\newcommand{\xd}{\delta}
\newcommand{\xD}{\Delta}
\newcommand{\xe}{\varepsilon}
\newcommand{\xz}{\zeta}
\newcommand{\xl}{\lambda}
\newcommand{\xL}{\Lambda}
\newcommand{\xm}{\mu}
\newcommand{\xn}{\nu}
\newcommand{\xr}{\rho}
\newcommand{\xs}{\sigma}
\newcommand{\xf}{\phi}
\newcommand{\xF}{\Phi}
\newcommand{\xo}{\omega}
\newcommand{\xO}{\Omega}
\newcommand{\rd}{\mathrm{d}}
\newtheorem{theorem}{Theorem}[section]
\newtheorem{lemma}[theorem]{Lemma}
\newtheorem{prop}[theorem]{Proposition}
\newtheorem{defin}[theorem]{Definition}
\newtheorem{remark}[theorem]{Remark}
\begin{document}
\title{Boundary Singularities on a Wedge-like Domain of a Semilinear Elliptic Equation}
\author{\textbf{ Konstantinos T. Gkikas}\\
Centro de Modelamiento
Matem\'{a}tico\\
Universidad de Chile,
Av. Blanco Encalada 2120 Piso 7
Santiago, Chile.\\
Email: kgkikas@dim.uchile.cl }

\maketitle
\begin{abstract}
Let $n\geq2$ and $ \xO\subset \mathbb{R}^{n+1}$ be a Lipschitz wedge- like domain . We construct positive weak
solutions of the problem
$$\xD u + u^p = 0 \quad\hbox{in }\, \xO,$$
 which vanish in a suitable trace sense on $\partial\xO,$ but which are
singular at prescribed ``edge" of $\xO$ if $p$ is equal or slightly above a certain exponent $p_0>1$ which depends on $\xO.$ Moreover, in the case which $\xO$ is unbounded, the solutions have fast decay at infinity.

\smallskip

\noindent {\bf AMS Subject Classification: }
35J60; 35D05;  35J25;  35J67.\\
\noindent{\bf Keywords: } Prescribed boundary singularities; Very weak solution; Critical exponents; Wedge-like domains.
\end{abstract}

\section{Introduction}

Let $\xO$ be a bounded domain in $\mathbb{R}^n,$ $n\geq2$ with smooth boundary $\partial\xO.$ A model of nonlinear
elliptic boundary value problem is the classical Lane-Emden-Fowler equation,
\begin{equation}
\begin{cases}
-\xD u&= |u|^p\qquad\mathrm{in}\phantom{\partial}\qquad\xO,\\
      u&>\phantom{|}0\phantom{|^p}\qquad\mathrm{in}\phantom{\partial}\qquad\xO,\\
     u&=\phantom{|}0\phantom{|^p}\qquad\mathrm{in}\qquad\partial\xO,
     \end{cases}
     \label{eksi}
\end{equation}
where $p>1.$ Following Brezis and Turner \cite{3} and Quittner and Souplet \cite{9}, we will say that a positive
function $u$ is a very weak solution of problem (\ref{eksi}), if $u$ and $\mathrm{dist}(x,\partial\xO)u^p\;\in\;L^1(\xO),$ and
$$\int_{\xO}u\xD v+|u|^pv\rd x=0,\qquad\forall v\in C^2(\overline{\xO}),\;\mathrm{with}\;v=0\;\mathrm{on}\;\partial\xO.$$
From the results in \cite{3,9}, it follows that if $p$ satisfies the constraint
\be
1<p<\frac{n+1}{n-1},\label{crit1}
\ee
then $u\in C^2(\overline{\xO}),$ i.e. $u$ is a classical solution of problem \eqref{eksi}.

It is well known that, if $1<p<\frac{n+2}{n-2},$ one can use Sobolev's embedding and standard variational techniques to prove the existence of a positive very weak solution of problem (\ref{eksi}). However, if $\frac{n+1}{n-1}<p<\frac{n+2}{n-2},$ this very weak solution may not be bounded. A result in the understanding of very weak solutions was achieved by Souplet \cite{10}. He constructed an example of a positive function $a\in L^\infty(\xO)$ such that problem (\ref{eksi}), with $u^p$ replaced by $a(x)u^p$ for $p > \frac{n+1}{n-1},$ has a
very weak solution which is unbounded, developing a point singularity on the boundary. This shows that the exponent $p=\frac{n+1}{n-1}$
is truly a critical exponent. Let us mention that the study
of the behavior near an isolated boundary singularity of any positive solution of (1.1) when the
exponent $p\geq \frac{n+1}{n-1}$ was achieved by Bidaut-V\'{e}ron-Ponce-V\'{e}ron in \cite{2}. Finally, del Pino-Musso-Pacard \cite{pino} showed the existence of $\xe>0$ such that for any $p\in[\frac{n+1}{n-1},\frac{n+1}{n-1}+\xe)$ an unbounded, positive, very weak solution of (\ref{eksi}) exists which blows up at a prescribed point of $\partial\xO.$ For the respective problem with interior singularity see for example \cite{4,5,6,8}.

Let us give some definitions for convenience to the reader.
Let $n\geq2$ and $(r, \theta) \in[0,\infty)\times\mathbb{S}^{n-1}$ be the spherical-coordinates of $x\in\mathbb{R}^n$ abbreviated
by $x = (r, \theta).$ Given an open Lipschitz spherical cap $\xo\subsetneq\mathbb{S}^{n-1}$ let
$$C_{\xo}=\{x=(r,\theta):\;r>0,\;\theta\in\xo\},$$
be the corresponding infinite cone. The set
$$C_\xo^R=C_\xo\cap B_R(0)\subset\mathbb{R}^n$$
is called a conical piece with spherical cap $\xo$ and radius $R.$

A bounded Lipschitz domain $\xO\subset C_\xo$ is called a domain with a conical boundary piece if there exists a
conical piece $C_\xo^R$ such that $\xO\cap B_R(0)=C_\xo^R.$

We denote by $\xl$ and $\xf_1(\theta)$ to be respectively  the first eigenvalue and the corresponding eigenfunction of the problem
\begin{align}
\begin{cases}
-\xD_{\mathbb{S}^{n-1}}u&=\xl u\qquad \mathrm{in}\; \xo\\
\phantom{-\xD_{\mathbb{S}^{n-1}}} u&=0\phantom{u}\qquad \mathrm{on}\; \partial\xo,
\end{cases}\label{eigenin}
\end{align}
with $\int_{\xo}\xf_1^2\rd S_x=1.$

Finally, we define the exponent
\be
p^*=\frac{n+\xg}{n+\xg-2};\qquad\;\hbox{with }\qquad\xg=\frac{2-n}{2}+\sqrt{\left(\frac{n-2}{2}\right)^2+\xl},\label{gamma}
\ee
and note that $p^*$ depends on $\xo.$

In the same spirit as above, McKennab-W. Reichel \cite{mr} generalized the results of  Souplet \cite{10} to domain with conical boundary piece, and they showed that the exponent $p^*$ is a truly critical exponent, in the sense that, if $1<p<p^*,$ then every very weak solution of problem (\ref{eksi}) is bounded (see also \cite{1}). Finally, Hor\'{a}k-McKennab-Reichel \cite{HMR} considered a bounded Lipschitz domain $\xO$ with a conical boundary piece of spherical cap  $\xo\subset \mathbb{S}^{n-1},$ at
$0\in\partial\xO,$ and they proved the existence of $\xe>0$ such that for any $p\in(p^*,p^*+\xe)$ an unbounded, positive, very weak solution of (\ref{eksi}) exists which blows up at $0\in\partial\xO.$

Let us  consider the following problem
\begin{align}
\begin{cases}
\xD_x u+u^p&=0,\qquad\mathrm{in}\;C_{\xo}\\
u\phantom{p}&>0,\qquad\mathrm{in}\;C_{\xo}\\
u\phantom{p}&=0,\qquad\mathrm{on}\;\partial C_{\xo}\setminus\{0\}.\end{cases}\label{pro1in}
\end{align}

The authors in \cite{HMR} proved that problem (\ref{pro1in}) admits a positive solution of the form $w(\theta)=|x|^{-\frac{2}{p-1}}\xf_p(\theta),$  where $\xf_p$ solves the problem
\begin{align}
\nonumber
\xD_{\mathbb{S}^{n-1}} \xf-\frac{2}{p-1}\left(-\frac{2}{p-1}+n-2\right)\xf+\xf^p&=0,\qquad\mathrm{in}\;\xo\\
\xf\phantom{p}&=0,\qquad\mathrm{on}\;\partial\xo,\label{pro2in}
\end{align}
for any $p\in(p^*,\infty)$ if $n=2,\;3$ and any $p\in(p^*,\frac{n+1}{n-3})$ if $n\geq4.$ But this solution does not have fast decay at infinity.

We note here that if $\xo=\mathbb{S}^{n-1}_+,$ then $\xg=1,$ thus the critical exponent  $p^*=\frac{n+1}{n-1}$ and $C_{\xo}=\mathbb{R}^n_+.$  In \cite{pino}, del Pino-Musso-Pacard constructed a solution of problem (\ref{pro1in}) in $\mathbb{R}^n_+$ with fast decay. More precisely they showed that there exists $\xe>0$ such that for any $p\in(\frac{n+1}{n-1},\frac{n+1}{n-1}+\xe)$  problem (\ref{pro1in}) in $\mathbb{R}^n_+$ admits a solution $u\in C^2(\mathbb{R}^n_+)$ satisfying
$$u(x)\approx|x|^{-\frac{2}{p-1}}\xf_p(\theta),\qquad\mathrm{as}\;|x|\rightarrow0$$
 and
$$u(x)\approx|x|^{-(n-1)}\xf_1(\theta),\qquad\mathrm{as}\;|x|\rightarrow\infty.$$

The first result of this work is the construction of a singular solution at $0$ with fast decay at infinity, for problem (\ref{pro1in}). In particular we prove
\begin{theorem}
There exists a number $p(n,\xl)>p^*,$ such that for any $$p\in(p^*,p(n,\xl)),$$ there exists a solution $u_1(x)$ to problem (\ref{pro1in}) such that
$$u_1(x)=|x|^{-\frac{2}{p-1}}\xf_p(\theta)(1+o(1))\qquad\mathrm{as}\;|x|\rightarrow0,$$
where $\xf_p$ solves (\ref{pro2in}), and
$$u_1(x)=|x|^{2-\xg-n}\xf_1(\theta)(1+o(1))\qquad\mathrm{as}\;|x|\rightarrow\infty,$$
where $\xg$ is defined in (\ref{gamma}).
In addition, we have the pointwise estimate
$$
|u_1(x)|\leq C |x|^{-\frac{2}{p-1}}||\xf_p||_{\mathcal{C}^2(\xo)}
,
$$
for some constant $C>0$ which does not depend on $p.$\label{mainprop}
\end{theorem}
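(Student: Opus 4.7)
The plan is to construct $u_1$ as a perturbation of a carefully built approximate solution $U$ that already realises the two desired asymptotic profiles, in the spirit of \cite{pino,HMR} adapted to the cone $C_\xo$. The inner model is the scaling--invariant solution $w(x) = |x|^{-2/(p-1)}\xf_p(\theta)$ from (\ref{pro2in}), and the outer model is the decaying harmonic profile $\PS(x) = |x|^{2-\xg-n}\xf_1(\theta)$, whose exponent is the negative root of $\xa(\xa+n-2) = \xl$. For $p>p^*$ one has $-2/(p-1) > 2-\xg-n$, so there is a clean separation of scales; fixing a large radius $R = R(p)$ and a smooth cutoff $\chi$, I would set
\[
U(x) = \chi(|x|/R)\,w(x) + \bigl(1-\chi(|x|/R)\bigr)\,A(p)\,\PS(x),
\]
and tune the scalar $A(p)$ so that $w$ and $A(p)\PS$ approximately agree on the gluing annulus. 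The bifurcation $\xf_p\to 0$ as $p\downarrow p^*$ (since (\ref{pro2in}) linearises exactly to the eigenvalue problem (\ref{eigenin}) at $p=p^*$) makes the matching coherent because $\xf_p$ is nearly proportional to $\xf_1$ for $p$ close to $p^*$. A direct computation shows that the error $E := \xD U + U^p$ is concentrated on $\{|x|\sim R\}$ and goes to $0$ in a suitable weighted norm as $p\downarrow p^*$.

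The next step is to invert the linearisation $L := \xD + p\,U^{p-1}$ in weighted H\"older (or $L^\infty$) spaces whose weight behaves like $r^{-2/(p-1)}$ for $r\leq R$ and like $r^{2-\xg-n}$ for $r\geq R$. Expanding in the Dirichlet eigenbasis of $-\xD_{\mathbb{S}^{n-1}}$ on $\xo$ diagonalises $L$ into a family of radial ODEs whose indicial roots are explicit. At $p = p^*$ the two indicial roots in the $\xf_1$-mode coalesce, which is precisely the resonance $-2/(p^*-1) = 2-\xg-n$; for $p$ slightly above $p^*$ these roots separate and $L$ becomes invertible on the $\xf_1$-orthogonal subspace, the residual obstruction along $\xf_1$ being absorbed by tuning the free parameter $A(p)$ in a one-parameter Lyapunov--Schmidt reduction. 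The higher angular modes remain non-resonant provided $p$ is close enough to $p^*$, and this is what pins down the threshold $p(n,\xl)$.

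Writing $u_1 = U + v$, the problem reduces to $Lv = -E - N(v)$ with $N(v) := (U+v)^p - U^p - p U^{p-1} v$ super-linear in $v$, and a Banach contraction in the weighted space produces a unique small $v$ with $\|v\|_{\ast}\to 0$ as $p\downarrow p^*$. Positivity of $u_1 = U+v$ then follows from the strong maximum principle (for $p$ near $p^*$ the perturbation $v$ is dominated by $U>0$), while the asymptotic expansions at $0$ and at infinity, together with the claimed pointwise bound, are immediate consequences of the construction of $U$ and the definition of the weighted norm (with the $\|\xf_p\|_{\mathcal{C}^2(\xo)}$ factor in the pointwise estimate tracking the amplitude of the inner profile).

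The main obstacle is the linear analysis: producing a right--inverse of $L$ in the two--ended weighted spaces \emph{uniformly in} $p$, correctly setting up the Lyapunov--Schmidt reduction to remove the resonant $\xf_1$ mode, and verifying that the compensation by the free parameter $A(p)$ is consistent with the nonlinear correction. Controlling this resonance between the inner and outer scales while keeping all estimates $p$-uniform is precisely what forces the range of admissible $p$ to be a small right--neighbourhood of $p^*$.
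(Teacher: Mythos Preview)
Your strategy is sound in outline, but it takes a genuinely different route from the paper's. You propose to glue the inner profile $w=|x|^{-2/(p-1)}\xf_p$ directly to the outer harmonic $A\,|x|^{2-\xg-n}\xf_1$ via a cutoff at some radius $R$, then invert the linearisation in two--ended weighted spaces and run a Lyapunov--Schmidt reduction along the $\xf_1$ mode, tuning $A(p)$ to kill the obstruction. The paper instead passes to Emden--Fowler variables $t=-\log|x|$, writes $u_1=|x|^{-2/(p-1)}\xf(t,\theta)$, and takes as approximate solution $\xf_0(t,\theta)=a(t)\xf_1(\theta)$, where $a$ is the \emph{heteroclinic orbit} of the scalar ODE $a''+Aa'-\xe a+\xm a^p=0$ connecting $0$ at $t=-\infty$ to $a_\infty=(\xe/\xm)^{1/(p-1)}$ at $t=+\infty$. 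This heteroclinic already interpolates smoothly between the two regimes, so no cutoff, no matching parameter, and no Lyapunov--Schmidt reduction are needed: the linearised operator $L_p$ is inverted directly in spaces weighted by powers of $a$ itself (Proposition~\ref{propest}), and a single contraction produces the correction $\psi$ with $|\psi|\ll a\xf_1$. The asymptotics at $0$ and $\infty$ then fall out of the known behaviour $a(t)\to a_\infty$ as $t\to+\infty$ and $a(t)\sim e^{\xd^- t}$ as $t\to-\infty$, together with the identity $\xd^-+2/(p-1)=n+\xg-2$. Your gluing approach could in principle be made to work, but it carries the extra burden of controlling the cutoff error, setting up the indicial analysis at the inner end (where the potential $p\,\xf_p^{p-1}|x|^{-2}$ is not radially separable), and closing the reduced equation for $A(p)$; the heteroclinic ansatz absorbs all of this into a one--dimensional ODE and yields the $p$--uniform bound $|u_1|\le C|x|^{-2/(p-1)}\|\xf_p\|_{\mathcal{C}^2}$ for free.
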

To describe our main result let us introduce some new notations.

Let $x\in\mathbb{R}^n$ with $n\geq2$.  Given $\tau\in \mathbb{R}$, we let $\xo(\tau)\subsetneq\mathbb{S}^{n-1}$ to be the corresponding Lipschitz spherical cap.
We set $$r_{\xs(\tau)}=|x-\xs(\tau)|,$$
where $\xs:\mathbb{R}\rightarrow\mathbb{R}^n$ is a smooth curve such that
$$\sup_{\tau\in\mathbb{R}}\left\{|\xs(\tau)|+|\xs'(\tau)|+|\xs''(\tau)|\right\}<C<\infty.$$

Now, given $\tau$, we  let $(r_{\xs(\tau)}, \theta) \in[0,\infty)\times\mathbb{S}^{n-1}$ to be the spherical-coordinates of $x\in\mathbb{R}^n$ centered at $\xs(\tau)$ abbreviated
by $x = (r_{\xs(\tau)}, \theta).$ We define
$$\widetilde{C}_{\xo(\tau)}=\{x=(r_{\xs(\tau)},\theta):\;r_{\xs(\tau)}>0,\;\theta\in\xo(\tau)\}\subset\mathbb{R}^n$$
and we  set
$$\xO_{\tau_1,\tau_2}=\{(\tau,x)\in(\tau_1,\tau_2)\times\mathbb{R}^n: x\in \widetilde{C}_{\xo(\tau)}\}\subset\mathbb{R}^{n+1},$$

$$\xO_{\tau_1,\tau_2}^R=\xO_{\tau_1,\tau_2}\cap \{(\tau,x)\in(\tau_1,\tau_2)\times\mathbb{R}^{n}:\;x\in B_R(\xs(\tau))\}\subset\mathbb{R}^{n+1},$$ and
$$S_{\tau_1,\tau_2}=\{(\tau,x)\in[\tau_1,\tau_2]\times\mathbb{R}^n: r_{\xs(\tau)}=0\}.$$
Finally we define $\xl^*=\inf\limits_{\tau\in\mathbb{R}}\xl(\tau)$ and $\xg^*=\inf\limits_{\tau\in\mathbb{R}}\xg(\tau).$

\

In this work we assume that $\xo(\tau)$ depends smoothly on $\tau,$ i.e. $\xl(\tau)$ is a smooth bounded function with respect $\tau$ with bounded derivatives.
We also assume that $\inf\limits_{\tau\in\mathbb{R}}\xl(\tau)>0.$ Finally, we suppose that there exists $\xe>0,$ such that for any $p\in\left(\sup\limits_{\tau\in\mathbb{R}}p^*(\tau),\sup\limits_{\tau\in\mathbb{R}}p^*(\tau)+\xe\right),$ there exists a solution $u_1(\tau,x)$ of theorem \ref{mainprop}. That means, $\text{osc}_{\tau\in\mathbb{R}}\xl(\tau)$ is small enough.
\begin{theorem}
Let $\xe>0$ be small enough. Then there exists a number $p_0>\sup\limits_{\tau\in \mathbb{R}}p^*$ such that, given $p\in(\sup\limits_{\tau\in \mathbb{R}}p^*,p_0),$ and $\frac{2}{p-1}\leq-\rho< n+\xg^*-2,$ the following  problem
\begin{align}
\begin{cases}
-\xD u&=u^p\qquad\mathrm{in}\qquad \xO_{-\infty,\infty},\\ \nonumber
u&>0\phantom{^p}\qquad\mathrm{in}\qquad \xO_{-\infty,\infty}\\ \nonumber
u&=0\phantom{^p}\qquad\mathrm{on}\qquad  \partial\xO_{-\infty,\infty}\setminus S_{-\infty,\infty}
\end{cases}
\end{align}
possesses  very weak solutions $u$.
In addition we have that
$$u(\tau,x)\approx u_1\left(\tau,\frac{x-\xs(\tau)}{\xe}\right)\quad\hbox{as }\;r_{\xs(\tau)}\rightarrow0,$$
where $u_1$ is  in theorem \ref{mainprop}. And
$$u(\tau,x)\leq C\,r_{\xs(\tau)}^{\rho}\quad\hbox{as }\;r_{\xs(\tau)}\rightarrow\infty.$$\label{th1.2}
\end{theorem}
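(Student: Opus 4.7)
The plan is a perturbative construction around the cross-sectional profiles $u_1(\tau,\cdot)$ supplied by Theorem~\ref{mainprop}: assemble an approximate solution on $\Omega_{-\infty,\infty}$, invert the linearised operator in a suitable weighted norm, and solve by a contraction mapping argument. The small parameter is $\varepsilon>0$, controlling jointly the closeness of $p$ to $\sup_\tau p^*(\tau)$ and the oscillation $\mathrm{osc}_\tau\lambda(\tau)$, both assumed small.

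\textbf{Ansatz and error.} Fix $R>0$ and a smooth radial cut-off $\chi_R$ equal to $1$ for $r_{\sigma(\tau)}\le R$ and to $0$ for $r_{\sigma(\tau)}\ge 2R$. Set
$$U_0(\tau,x)=\chi_R(r_{\sigma(\tau)})\,u_1\!\left(\tau,\tfrac{x-\sigma(\tau)}{\varepsilon}\right)+\bigl(1-\chi_R(r_{\sigma(\tau)})\bigr)\,\Psi(\tau,x),$$
where $\Psi$ is a positive smooth function with prescribed decay $r_{\sigma(\tau)}^{\rho}$, chosen so that on the gluing annulus it matches the far-field asymptotic $r_{\sigma(\tau)}^{\,2-\gamma(\tau)-n}\varphi_1^{(\tau)}(\theta)$ of $u_1(\tau,\cdot)$ provided by Theorem~\ref{mainprop}. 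Then $U_0$ satisfies the boundary condition on $\partial\Omega_{-\infty,\infty}\setminus S_{-\infty,\infty}$ and realises the two prescribed asymptotics. The residual $E:=-\Delta U_0-U_0^p$ receives contributions from (i) the second $\tau$-derivative of $u_1$, present only because $u_1$ is embedded into one additional dimension, (ii) the drift of $\sigma$ and the slow rotation of $\omega(\tau)$, and (iii) the cut-off and the matching mismatch. The pointwise estimate $|u_1|\le C|y|^{-2/(p-1)}\|\varphi_p\|_{\mathcal{C}^2(\omega(\tau))}$ of Theorem~\ref{mainprop}, combined with the assumed regularity of $\sigma$ and $\lambda$, gives $\|E\|_{**}\le C\varepsilon^{\alpha}$ for some $\alpha>0$ in a weighted $L^{\infty}$-type norm.

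\textbf{Linear theory and fixed point.} Writing $u=U_0+\psi$ recasts the equation as $L\psi:=-\Delta\psi-p\,U_0^{p-1}\psi=E+N(\psi)$, with $N(\psi)=(U_0+\psi)^p-U_0^p-pU_0^{p-1}\psi$. Introduce the weighted norms
$$\|\psi\|_{*}=\sup_{(\tau,x)\in\Omega_{-\infty,\infty}}\bigl(r_{\sigma(\tau)}^{\,2/(p-1)}\wedge r_{\sigma(\tau)}^{-\rho}\bigr)|\psi|,$$
and the companion norm $\|\cdot\|_{**}$ heavier by two powers, so that $L$ is expected to map $\|\cdot\|_*$ to $\|\cdot\|_{**}$. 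The condition $\frac{2}{p-1}\le-\rho<n+\gamma^{*}-2$ places the weight exponents strictly between consecutive indicial roots of the Dirichlet Laplacian on the cone, both at the apex and at infinity, ruling out kernel elements of the frozen model. Freezing $\lambda(\tau)\equiv\lambda^{*}$ and $\sigma(\tau)\equiv 0$ yields a $\tau$-translation invariant operator which is inverted via Green's representation on the model cone combined with Fourier analysis in $\tau$. Because $\mathrm{osc}_\tau\lambda$ and $|\sigma'|+|\sigma''|$ are small, $L$ differs from this model by a perturbation of small operator norm, so a Neumann/Fredholm argument furnishes a right inverse $T$ with $\|T\|_{**\to *}$ uniform in $\varepsilon$ and $\tau$. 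A Banach contraction on the ball $\{\|\psi\|_{*}\le 2\|T\|\,\|E\|_{**}\}$ applied to $\psi\mapsto T(E+N(\psi))$ produces a small solution $\psi$. Positivity of $u=U_0+\psi$ follows because $U_0\gtrsim r_{\sigma(\tau)}^{-2/(p-1)}\wedge r_{\sigma(\tau)}^{\rho}$ dominates $\psi$ in the weighted sense, and the very-weak formulation is checked by integrating by parts against admissible test functions using the $L^1$-integrability guaranteed by the weights.

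\textbf{Main obstacle.} The heart of the proof is the linear theory over the unbounded $\tau$-variable: the cross-section $\omega(\tau)$ depends on $\tau$, so separation of variables is not available, yet one must invert $L$ with operator norm uniform in $\tau\in\mathbb{R}$ and in $p$ near the critical value $\sup_\tau p^*(\tau)$. Freezing coefficients and perturbing by a Neumann series is the natural remedy, but the weight exponents must simultaneously avoid the indicial roots of the Laplacian on the cone both at the apex and at infinity, and also avoid resonance with the non-trivial kernel produced by the bifurcation behind Theorem~\ref{mainprop}; this is exactly what the two-sided constraint $\frac{2}{p-1}\le-\rho<n+\gamma^{*}-2$ encodes.
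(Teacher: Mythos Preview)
Your overall plan---construct an approximate solution from the fibrewise profiles $u_1(\tau,\cdot)$, invert a linear operator in doubly weighted $L^\infty$ norms, and close by contraction---is exactly the paper's. But the paper's execution differs from yours in two places that make the argument considerably simpler, and your version of the linear theory, as written, is where the gap lies.

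First, the paper does \emph{not} invert the linearised operator $L\psi=-\Delta\psi-pU_0^{p-1}\psi$. Instead it inverts only the Laplacian (Proposition~\ref{pro1*}), by barrier arguments and the maximum principle, and throws the potential term $pU_0^{p-1}\psi$ back into the nonlinearity. The reason this closes is the key observation you do not use: since $\varphi_p\to 0$ in $\mathcal{C}^2(\omega)$ as $p\downarrow p^*$, the pointwise bound $|u_1|\le C|x|^{-2/(p-1)}\|\varphi_p\|_{\mathcal{C}^2}$ makes $|x|^2U_0^{p-1}$ small in $L^\infty$, so after applying the Laplace inverse the Lipschitz constant in \eqref{ass3} is controlled by $\|\varphi_p\|^{p-1}$ and can be made $<1$ simply by taking $p$ close to $\sup_\tau p^*(\tau)$. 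Your proposal instead freezes $\lambda$ and $\sigma$, Fourier-transforms in $\tau$, and appeals to indicial-root non-resonance for the Dirichlet Laplacian; but the operator you actually want to invert carries the singular potential $pU_0^{p-1}\sim r^{-2}\varphi_p^{p-1}$, which shifts the indicial roots away from those of $-\Delta$, so the window you cite does not directly apply to $L$. The shift is small because $\varphi_p$ is small, but once you quantify that you are effectively back to the paper's ``invert $\Delta$, treat the rest as small'' strategy.

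Second, the paper never works on the unbounded domain directly. It first solves on the truncations $\Omega_{\tau_1,\tau_2}^R$ (where the Laplace inverse is obtained by elementary barriers $r_{\sigma(\tau)}^{\delta}\varphi_\delta^*$ and the maximum principle), obtains solutions $u_{R,\tau_1,\tau_2}$ with bounds in $C_{\delta,\rho}$ uniform in $R,\tau_1,\tau_2$, and then passes to the limit via interior estimates and Arzel\`a--Ascoli. This completely bypasses what you flag as the ``main obstacle'' (uniform-in-$\tau$ linear theory on an unbounded strip with variable cross-section). The paper also uses a simpler ansatz---a cut-off of the rescaled profile $\eta(x)\,\varepsilon^{-2/(p-1)}u_1\bigl((x-\sigma)/\varepsilon\bigr)$ without your matching term $\Psi$---because the fast-decay asymptotic of $u_1$ already supplies the correct far-field behaviour after cut-off. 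Finally, the general case $|\sigma|+|\sigma'|+|\sigma''|\le C$ is reduced to the small-$\sigma$ case by a scaling $v(y)=k^{2/(p-1)}u(ky)$, which you do not mention.
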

Our third and final result of this paper is the following
\begin{theorem}
Let $\xa>0$ be small enough and $\xO\subset\mathbb{R}^{n+1}$ be a bounded Lipschitz domain such that
$$\xO\cap\xO_{\tau_1-\xa,\tau_2+\xa}^R=\xO_{\tau_1-\xa,\tau_2+\xa}^R\subset\mathbb{R}^{n+1}.$$
There exists a number $p_0>\sup\limits_{\tau\in \mathbb{R}}p^*$ such that, given $p\in(\sup\limits_{\tau\in \mathbb{R}}p^*,p_0),$  there exist very weak solutions $u$ to the problem
\begin{align}
\begin{cases}
-\xD u&=u^p,\qquad\mathrm{in}\qquad \xO,\\ \nonumber
u&>0\phantom{^p},\qquad\mathrm{in}\qquad \xO\\ \nonumber
u&=0\phantom{^p},\qquad\mathrm{on}\qquad  \partial\xO\setminus S_{\tau_1-\xa,\tau_2+\xa}.
\end{cases}
\end{align}
Moreover, $\forall (\tau,x)\in\xO_{\tau_1-\frac{\xa}{4},\tau_2+\frac{\xa}{4}}^R$
$$u(\tau,x)\approx u_1\left(\tau,\frac{x-\xs(\tau)}{\xe}\right)\qquad\hbox{as }\;r_{\xs(\tau)}\rightarrow0.$$
\label{th1.3}
\end{theorem}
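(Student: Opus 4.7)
The plan is to build the solution on $\xO$ by taking the infinite-wedge solution from Theorem \ref{th1.2}, localizing it with a smooth cutoff so that it vanishes on $\partial\xO\setminus S_{\tau_1-\xa,\tau_2+\xa}$, and then absorbing the resulting error via a fixed point argument in a weighted space. Let $U$ denote the very weak solution to the wedge problem on $\xO_{-\infty,\infty}$ produced by Theorem \ref{th1.2}, so that $U(\tau,x)\approx u_1(\tau,(x-\xs(\tau))/\xe)$ near $S_{-\infty,\infty}$ and $U(\tau,x)\leq C\,r_{\xs(\tau)}^{\rho}$ away from it. Pick a smooth cutoff $\eta$ on $\xO$ with $\eta\equiv 1$ on $\xO_{\tau_1-\frac{\xa}{2},\tau_2+\frac{\xa}{2}}^{R/2}$ and $\mathrm{supp}\,\eta\subset\xO_{\tau_1-\xa,\tau_2+\xa}^{R}$, and set the approximate solution $u_0=\eta U$. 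By construction $u_0$ vanishes on $\partial\xO\setminus S_{\tau_1-\xa,\tau_2+\xa}$ and reproduces the prescribed edge asymptotics.

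Next I would compute the error. On the region $\{\eta=1\}$ we have $-\xD u_0-u_0^{p}=0$ since $U$ solves the wedge equation, while on the intermediate annular region where $\nabla\eta\neq 0$ the identity
\be
-\xD u_0-u_0^{p}=-U\xD\eta-2\nabla\eta\cdot\nabla U+(\eta-\eta^{p})U^{p}=:-f_{0}
\ee
produces an error $f_0$ which is bounded, compactly supported away from the singular set $S$, and small thanks to the decay $U\leq C r_{\xs(\tau)}^{\rho}$ combined with the range $-\rho<n+\xg^{*}-2$ and smallness of $\xe$. Seeking $u=u_{0}+v$ reduces the problem to
\be
-\xD v=p\,u_{0}^{p-1}v+N(v)+f_{0}\qquad\hbox{in }\xO,\qquad v=0\hbox{ on }\partial\xO\setminus S_{\tau_1-\xa,\tau_2+\xa},
\ee
where $N(v)=(u_{0}+v)^{p}-u_{0}^{p}-p\,u_{0}^{p-1}v$ is a superlinear remainder.

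The core step is to solve this linear--plus--remainder problem by contraction. I would work in a weighted Banach space $X$ of functions $v$ with $\|v\|_{X}=\sup_{\xO}r_{\xs(\tau)}^{-\rho'}|v|<\infty$ for a weight $\rho'$ slightly larger than $\rho$ but still admissible. Invertibility of the linear operator $L=-\xD-p\,u_{0}^{p-1}$ with Dirichlet data on $\partial\xO\setminus S$, acting between weighted spaces, follows from: (i) $u_{0}^{p-1}$ has the Hardy type singular profile $r_{\xs(\tau)}^{-2}\xf_{p}^{p-1}$ along $S$, which for $p\in(\sup p^{*},p_{0})$ with $p_{0}$ slightly bigger than $\sup p^{*}$ stays below the Hardy threshold associated with the wedge first eigenvalue $\xl^{*}$; (ii) a maximum principle together with super-- and sub--solution barriers of the form $r_{\xs(\tau)}^{\rho'}\xf_{1}(\theta)$ gives a uniform bound for the Green operator of $L$ in $X$. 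Combining this bound with the smallness $\|f_{0}\|_{X}=O(\xe^{a})$ and the Lipschitz estimate $\|N(v_{1})-N(v_{2})\|_{X}\leq C(\|v_{1}\|_{X}+\|v_{2}\|_{X})^{p-1}\|v_{1}-v_{2}\|_{X}$ yields a unique fixed point in a small ball of $X$ via Banach's contraction principle.

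Finally, positivity of $u=u_{0}+v$ on the wedge neighborhood is automatic because there $u_{0}>0$ and $|v|=o(u_{0})$, while on the complement $u_{0}=0$ and the linear problem driven by $f_{0}\geq 0$ (up to rearranging $f_0$ as a superposition of positive contributions) combined with the maximum principle delivers $v>0$; the very weak formulation is inherited from $U$ on the wedge part and from the classical regularity of $v$ off the singular set. The principal obstacle, as in Theorem \ref{th1.2}, is controlling the linearized operator $L$: the potential $p\,u_{0}^{p-1}$ is critical from the Hardy standpoint along the entire edge $S$ and the Dirichlet data are prescribed on a Lipschitz non-edge boundary that is not compatible with separation of variables. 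This forces both the restriction $p<p_{0}$, ensuring strict subcriticality of the potential relative to $\xl^{*}$, and the smallness of $\xa$ and of $\mathrm{osc}_{\tau}\xl(\tau)$ already used in Theorem \ref{th1.2}, so that the edge barriers glue correctly to global super--solutions on $\xO$.
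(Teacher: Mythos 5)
Your overall blueprint, an approximate solution built by cutoff plus a contraction mapping, is the right shape, but the specific decomposition you choose diverges from the paper's and contains a gap in the linear theory and a flawed positivity argument.

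On the linear theory. The paper does \emph{not} linearize around the approximate solution. Its fixed point equation is
$v=-G_{\xd,\tau_1,\tau_2}\bigl(\xn^2(\xD u_\xe+|u_\xe+v|^p)\bigr)$,
where $G_{\xd,\tau_1,\tau_2}$ is the Green operator for the plain Laplacian from Proposition \ref{pro1**}, and the entire nonlinear term (including its ``linear part'' $pu_\xe^{p-1}v$) is treated as a perturbation. The contraction works because, for $p$ close to $\sup_\tau p^*$, $\sup_\tau\|\xf_p(\tau,\cdot)\|_{L^\infty}$ can be taken as small as one likes, which makes the Lipschitz constant of $v\mapsto |u_\xe+v|^p$ small; no Hardy-type spectral comparison is needed. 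You instead propose to invert $L=-\xD-pu_0^{p-1}$ by a Hardy-type bound and a barrier $r_{\xs(\tau)}^{\rho'}\xf_1$. This is a genuinely different route, and it is exactly where the gap lies: a barrier of the form $r_{\xs(\tau)}^{\rho'}\xf_1(\theta)$ controls $L$ near the edge $S$ but does not respect the Dirichlet condition on the remaining, general Lipschitz part of $\partial\xO$. Proposition \ref{pro1**} is precisely the device the paper builds to handle this: it splits the source with a cutoff, solves the near-edge piece inside an enlarged wedge $\widehat{\xO}_{\tau_1-\xa,\tau_2+\xa}^{2R}$ via Proposition \ref{pro1*}, and corrects the far piece with the standard Dirichlet Green function of the bounded domain. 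You assert the invertibility of $L$ without constructing an analogue of this decomposition, and that step is nontrivial.

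On the approximate solution. The paper uses $u_\xe=\eta(x)\,\xe^{-\frac{2}{p-1}}u_1\!\bigl(\tfrac{x-\xs(\tau)}{\xe}\bigr)$, the scaled \emph{cone} solution from Theorem \ref{mainprop} with a cutoff, not the global wedge solution $U$ from Theorem \ref{th1.2}. Using $U$ is a plausible variant, but it adds a layer (you would need to control $\xD\eta\cdot\nabla U$ and $U\xD\eta$ via the decay of $U$, which is itself only established through the Theorem \ref{th1.2} fixed point), whereas $u_1$ comes with the explicit estimates of Lemmas \ref{lemma11} and \ref{mainlem} that feed directly into the contraction.

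On positivity. Your claim that $f_0\geq 0$ ``up to rearranging'' is not correct: writing $f_0=-(\eta-\eta^p)U^p+2\nabla\eta\cdot\nabla U+U\xD\eta$, the first term is nonpositive and the gradient terms have no sign, so the maximum principle does not give $v>0$ on $\{\eta=0\}$. Trying to prove $v>0$ is the wrong target. The paper's argument is: $|v|\leq\frac{1}{4}u_\xe$ near $S$ forces $u=u_\xe+v>0$ there; since $-\xD u=|u_\xe+v|^p\geq 0$ in $\xO$ and $u\geq 0$ on $\partial\xO\setminus S$ with $u\to+\infty$ at $S$, the maximum principle gives $u\geq 0$, and then $u>0$ in $\xO$ by the strong maximum principle. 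You should adopt that reasoning rather than aim for sign conditions on $f_0$.
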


The paper is organized as follows. In section \ref{cone} we prove theorem \ref{mainprop}. In subsection \ref{u1}, we prove some regularity  results with respect $\tau,$ for the function $u_1(\tau,x)$ in theorem \ref{mainprop}. Section \ref{theorems} will be devoted to the proofs of theorems \ref{th1.2} and \ref{th1.3}.

\setcounter{equation}{0}
\section{The eigenvalue problem on spherical caps.}

Let $n\geq2,\;\tau\in\mathbb{R},$ and $\xo(\tau)\subsetneq\mathbb{S}^{n-1}$ be the corresponding open Lipschitz spherical cap. We denote by $\xl(\tau)$ and $\xf_1(\tau,\theta)$ to be respectively the first eigenvalue and eigenfunction of the eigenvalue  problem
\begin{align}
\begin{cases}
-\xD_{\mathbb{S}^{n-1}}u&=\xl(\tau) u,\qquad \mathrm{in}\; \xo(\tau)\\
\phantom{-\xD_{\mathbb{S}^{n-1}}} u&=0\phantom{u},\qquad \mathrm{on}\; \partial\xo,
\end{cases}\label{eigen}
\end{align}
with $\int_{\xo(\tau)}\xf_1^2dS_x=1.$

We assume that $\xo(\tau)$ depends smoothly on $\tau,$ i.e. $\xl(\tau)$ is a smooth bounded function with respect $\tau$ with bounded derivatives.
We  also assume that $\inf\limits_{\tau\in\mathbb{R}}\xl(\tau)>0.$

Now note that, without loss of generality, we can set $\theta_1=\cos t,$ with $0<t<\xb(\tau),$ where $\xb(\tau)$ is a smooth function with bounded derivatives satisfying
\begin{equation*}
\begin{cases}
&0<\inf\limits_{\tau\in\mathbb{R}}\xb(\tau)<\sup\limits_{\tau\in\mathbb{R}}\xb(\tau)<2\pi\qquad \hbox{for } \quad n=2\\
& \hbox{and } \\
& 0<\inf\limits_{\tau\in\mathbb{R}}\xb(\tau)<\sup\limits_{\tau\in\mathbb{R}}\xb(\tau)<\pi \qquad \hbox{for } \quad n\geq3.
\end{cases}
\end{equation*}

Then problem (\ref{eigen}) is equivalent to the following one
\begin{align}
\begin{cases}
-\sin^{2-n} t\frac{d}{dt}\left(\sin^{n-2} t\frac{d\xf_1}{dt}\right)&=\xl\xf_1\quad\mathrm{in}\;\;(0,\xb(\tau)).\\[3mm]
\xf_1(\xb(\tau))&=0\\[3mm]
\frac{d\xf_1}{dt}(0)&=0,
\end{cases}\label{eigenode*}
\end{align}
with
$$C(n)\int_0^{\xb(\tau)}\sin^{n-2}(t)|u|^2\rd t=\int_{\xo}|\xf_1|^2\rd S=1.$$
We note here that, for $n=2$ in problem (\ref{eigenode*}), we may have $\xf_1(0)=0$ instead of $\frac{d\xf_1}{dt}(0)=0.$

We have the following lemma
\begin{lemma}
Let $\xf_1(\tau,\theta)$ be the first eigenfunction of the following eigenvalue problem
\begin{align}
\nonumber
-\xD_{\mathbb{S}^{n-1}}u&=\xl u,\qquad \mathrm{in}\; \xo(\tau)\\
\phantom{-\xD_{\mathbb{S}^{n-1}}} u&=0\phantom{u},\qquad \mathrm{on}\; \partial\xo(\tau),\label{eigenz}
\end{align}
with $\int_{\xo(\tau)}\xf_1^2dS=1.$ Then there exists a positive constant $C$ such that
\be
\sup_{\tau\in\mathbb{R}}\left|\left||\xf_1|+\left|\frac{\partial\xf_1}{\partial \tau}\right|+\left|\frac{\partial^2\xf_1}{\partial \tau^2}\right|\right|\right|_{L^\infty(\xo(\tau))}<C.
\ee\label{f1}
\end{lemma}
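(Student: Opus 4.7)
The plan is to reduce the lemma to a parameter-dependent Sturm-Liouville problem on a fixed interval and then invoke the classical smooth dependence of simple eigenfunctions on parameters. Since $\xo(\tau)\subset\mathbb{S}^{n-1}$ is a spherical cap, invariant under the rotations fixing its axis, the positive first eigenfunction is axially symmetric, so $\xf_1(\tau,\theta)=\xf_1(\tau,t)$ with $\theta_1=\cos t$, $t\in[0,\xb(\tau))$, and $\xf_1$ solves (\ref{eigenode*}). To handle the moving endpoint, I would introduce $s=t/\xb(\tau)\in(0,1)$ and set $\psi(\tau,s)=\xf_1(\tau,\xb(\tau)s)$. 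A direct computation shows that $\psi$ satisfies
\begin{equation*}
-\sin^{2-n}(\xb s)\frac{d}{ds}\bigl(\sin^{n-2}(\xb s)\partial_s\psi\bigr)=\xb^2\xl\,\psi\quad\hbox{on }(0,1),
\end{equation*}
with $\psi(\tau,1)=0$, the appropriate regularity condition at $s=0$, and normalization $C(n)\xb\int_0^1\sin^{n-2}(\xb s)|\psi|^2\,ds=1$. After this reduction, the coefficients and the eigenvalue depend on $\tau$ only through the scalar parameters $\xb(\tau)$ and $\xl(\tau)$, which by hypothesis vary smoothly, with uniformly bounded derivatives, in a compact parameter set.

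By classical Sturm-Liouville theory the first eigenvalue of this fixed-domain problem is simple and isolated from the rest of the spectrum, uniformly as $(\xb,\xl)$ ranges over its compact parameter set. I would then apply standard perturbation theory for simple eigenvalues (Kato, or the implicit function theorem applied to the system ``$L_{\xb}\psi=\xb^2\xl\psi$ together with $\|\psi\|_{L^2}=1$'' in $C^2([0,1])\times\mathbb{R}$) to deduce that $\psi$ depends in a $C^2$ way on the parameters. Combining this with the uniformity supplied by compactness of the parameter range and by the boundedness of $\xb,\xb',\xb'',\xl,\xl',\xl''$, one obtains
\begin{equation*}
\sup_{\tau\in\mathbb{R}}\Bigl(\|\psi\|_{C^2([0,1])}+\|\partial_\tau\psi\|_{L^\infty([0,1])}+\|\partial_\tau^2\psi\|_{L^\infty([0,1])}\Bigr)<\infty.
\end{equation*}
Reverting to $\xf_1(\tau,t)=\psi(\tau,t/\xb(\tau))$ and invoking the chain rule (using the uniform bounds on $\xb',\xb''$) then yields the desired $L^\infty$ estimate.

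The main technical obstacle is the apparent singularity at $s=0$, which is a regular singular point of the ODE for $n\geq 3$ because $\sin^{n-2}(\xb s)$ vanishes there. However, $s=0$ corresponds to the pole of the cap, an \emph{interior} point of $\xo(\tau)\subset\mathbb{S}^{n-1}$, where intrinsic elliptic regularity for the Laplace-Beltrami operator gives smoothness of $\xf_1$ and of all its $\tau$-derivatives. Working intrinsically on the sphere, the family $\{\xo(\tau)\}$ is a smoothly varying family of Lipschitz open sets, and the eigenfunction depends smoothly on $\tau$ by standard Hadamard-type domain-variation arguments (or, equivalently, by transferring everything via a diffeomorphism to a fixed reference cap and applying Kato). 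This bypasses the apparent singularity of the one-dimensional formulation and supplies the smooth dependence of $\psi$ up to $s=0$ needed in the second step.
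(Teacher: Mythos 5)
Your proposal is correct in outline, but takes a genuinely different route from the paper. You reduce to a Sturm--Liouville problem on a fixed interval (as the paper does) and then invoke abstract perturbation theory for simple eigenvalues (Kato, or the implicit function theorem applied to the pair $(\psi,\lambda)$), with uniformity supplied by the fact that the true parameter $\xb(\tau)$ ranges over a compact subset of $(0,\pi)$ and $\xb',\xb''$ are bounded. The paper instead proceeds more concretely: it first establishes a weighted Sobolev/Poincar\'e inequality via Maz'ya's criterion, deduces $L^\infty$ control of $\xf_1$, writes down the explicit integral representation
$\xf_1(\tau,t)=\xl\int_t^{\xb(\tau)}\sin^{2-n}s\int_0^s\sin^{n-2}r\,\xf_1(\tau,r)\,\rd r\,\rd s$, rescales to $(0,1)$, and then proves differentiability in $\tau$ by analyzing the difference quotients $u_h$: it shows they satisfy an ODE with uniformly bounded right-hand side, extracts uniform $L^2$ and $L^\infty$ bounds, and uses the representation formula again to control the limit. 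Your approach buys conceptual economy (no representation formula, no difference-quotient estimates) at the cost of having to justify the application of perturbation theory in the presence of the coordinate singularity at $s=0$, which you correctly flag and defer to the intrinsic picture on the sphere (or a pullback to a fixed reference cap); the paper's approach is longer but completely explicit and self-contained at the ODE level, which is arguably more in keeping with the rest of the paper's techniques (where the same difference-quotient strategy is reused in Lemma~\ref{odereg} and Lemma~\ref{mainlem}). One small point worth fixing in your writeup: $\xl(\tau)$ is not an independent parameter but is itself the first eigenvalue determined by $\xb(\tau)$, so the ``compact parameter set'' is really just the compact range of $\xb$; your IFT formulation (treating $\lambda$ as an unknown alongside $\psi$) already handles this correctly, but the earlier sentence treating $\xb$ and $\xl$ as two independent scalar parameters is slightly misleading.
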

We postpone the proof of this  lemma to the appendix.

\setcounter{equation}{0}
\section{Positive singular solution in the Cone}\label{cone}
We keep the assumptions and notations of the previous section, and we consider the cone
$$C_{\xo(\tau)}=\{(r,\theta):\;r>0,\;\theta\in\xo(\tau)\},$$
where $r=|x|$ and $\theta=\frac{x}{|x|}.$
We define the critical exponent
$$p^*(\tau)=\frac{n+\xg(\tau)}{n+\xg(\tau)-2}\quad \hbox{with }\quad \xg(\tau)=\frac{2-n}{2}+\sqrt{\left(\frac{n-2}{2}\right)^2+\xl(\tau)}.$$
We consider the problem
\begin{align}
\begin{cases}
\xD_x u+u^p&=0,\qquad\mathrm{in}\;C_{\xo(\tau)}\\
u\phantom{p}&>0,\qquad\mathrm{in}\;C_{\xo(\tau)}\\
u\phantom{p}&=0,\qquad\mathrm{on}\;\partial C_{\xo(\tau)}\setminus\{0\}.
\end{cases}\label{pro1}
\end{align}
If we set $w=|x|^{-\frac{2}{p-1}}\xf(\theta),$ we arrive at the problem
\begin{align}
\begin{cases}
\xD_{\mathbb{S}^{n-1}} \xf-\frac{2}{p-1}\left(-\frac{2}{p-1}+n-2\right)\xf+\xf^p&=0,\qquad\mathrm{in}\;\xo(\tau)\\
\xf\phantom{p}&=0,\qquad\mathrm{on}\;\partial\xo(\tau).
\end{cases}\label{pro2}
\end{align}
By lemma 9 in \cite{HMR}, problem (\ref{pro2}) has a positive solution $\xf_p\in H_1(\xo(\tau))\cap L^\infty(\xo(\tau))$ for any $p\in(p^*,\infty)$ if $n=2$ or $3$ and for any $p\in(p^*(\tau),\frac{n+1}{n-3})$ if $n\geq4.$  Also as $p\downarrow p^*(\tau)$ then $-\frac{2}{p-1}\left(-\frac{2}{p-1}+n-2\right)\uparrow\xl(\tau)$ and
$$\xf_p=\left(\frac{\xl-\frac{2}{p-1}\left(-\frac{2}{p-1}+n-2\right)}{c_p}\right)^\frac{1}{p-1}(\xf_1+o(1)),$$
where $c_p=\int_{\xo(\tau)}\xf_1^{p+1}d\theta.$

In addition, for the same range on $p,$ by theorem 10 in \cite{HMR}, the function $$w_p(\tau,r,\theta)=r^{-\frac{2}{p-1}}\xf_p(\tau,\theta)$$ is a positive solution of (\ref{pro1}).

\

In the rest of this section, for convenience, we omit dependence on the parameter $\tau$ writing $\xl=\xl(\tau),$ $\xf_1(\theta)=\xf_1(\tau,\theta)$ and so on.

Let $p\in(p^*,\frac{n+2}{n-2})$, we look for solutions of (\ref{pro1}) of the form
\be
u_1(x)=|x|^{-\frac{2}{p-1}}\xf(-\log|x|,\theta),\label{solu}
\ee
where $\theta=\frac{x}{|x|},$ so that the equation $\xD u+u^p=0$ reads in terms of the function $\xf$ defined for $t\in\mathbb{R}$ and $\theta\in \xo,$ as
\be
\partial^2_t\xf+A\xf_t-\xe\xf+\left(\xD_{\mathbb{S}^{n-1}}\xf+\xl\xf\right) +\xf^p=0,\label{pro3}
\ee
where $t=-\log r,$ $A=-\left(n-2\frac{p+1}{p-1}\right)$ and $\xe=\xl+\frac{2}{p-1}(n-\frac{2p}{p-1}).$

Let $\xm=\int_\xo\xf_1^{p+1}\rd\theta,$ we define $a_\infty$ by
$$\xm a_\infty^{p-1}=\xe.$$
We look for a positive function $a$ which is a solution of
\be
a''(t)+ Aa'(t)-\xe a(t)+\xm a^p(t)=0,\label{2.23}
\ee
which converges to $0$ as $t$ tends to $-\infty$ and converges to $a_\infty$ as $t$ tends to $+\infty.$ Observe that,
when $p\in(p^*,\frac{n+2}{n-2}),$ the coefficients $A$ and $\xe$ are positive and, therefore, in this range, classical
ODE techniques yield the existence of $a,$ a positive heteroclinic solution of (\ref{2.23}) tending to $0$
at $-\infty$ and tending to $a_\infty$ at $+\infty$.

\

Observe that since the equation \eqref{2.23} is  autonomous,  the function $a$ is
not unique and $a$ can be normalized so that $a(0) = \frac{1}{2}a_\infty$.  For more  informations about  the
function $a$, we refer the reader to   lemmas 2.3, 2.4, 2.5 and appendix in \cite{pino}.

\begin{prop}
Let $0\leq p_0<\infty$ and $\xe$ be small enough, then there exists a unique operator
$$G_{p_0}:\;a^{p_0}L^\infty(\mathbb{R}\times\xo)\mapsto a^{p_0}L^\infty(\mathbb{R}\times\xo),$$
such that for any $a^{-p_0}g\in L^\infty(\mathbb{R}\times\xo),$ the function $u=G_{p_0}(g)$ is the unique solution of
$$L_pu=\left(\partial^2_t+A\partial_t-\xe+\left(\xD_{\mathbb{S}^{n-1}}+\xl\right) +p\xf_0^{p-1}\right)u=g;\;\;\;\xf_0=a(t)\xf_1(\theta),$$
with zero Dirichlet boundary data.

Furthermore,
\bea\left|\left|d^{-1}a^{-p_0}(t)\psi\right|\right|_{L^\infty(\mathbb{R}\times\xo)}\leq \frac{C}{\xe}\left|\left|a^{-p_0}(t)g\right|\right|_{L^\infty(\mathbb{R}\times\xo)}.\label{eq:phigp0}\eea
If in addition $g(t,\cdot)$ is $L^2-$orthogonal to $\xf_1$ for a.e. $t ,$ then we have
$$\left|\left|d^{-1}a^{-p_0}(t)\psi\right|\right|_{L^\infty(\mathbb{R}\times\xo)}\leq C\left|\left|a^{-p_0}(t)g\right|\right|_{L^\infty(\mathbb{R}\times\xo)}$$
where $d:\mathbb{\xo}\rightarrow(0,\infty)$ denotes the distance function to $\partial\xo.$\label{propest}
\end{prop}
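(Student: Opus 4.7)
The plan is to exploit the partial separation of variables afforded by the eigenfunction expansion on $\xo$. Write any candidate solution as
$$u(t,\theta)=u_{\parallel}(t)\xf_1(\theta)+u_{\perp}(t,\theta),\qquad \int_{\xo}u_{\perp}(t,\cdot)\xf_1\,dS=0\text{ for every }t,$$
and decompose $g=g_{\parallel}\xf_1+g_{\perp}$ in the same way. Projecting $L_p u=g$ onto $\xf_1$ and onto its $L^2(\xo)$-orthogonal complement gives a scalar ODE in $t$ for $u_{\parallel}$ and a cylinder equation for $u_{\perp}$. Because the potential $p\xf_0^{p-1}=p\,a^{p-1}(t)\xf_1^{p-1}(\theta)$ is not constant on $\xo$, the two equations are coupled, but the coupling coefficients are pointwise bounded by $Ca^{p-1}(t)\le Ca_{\infty}^{p-1}=C\xe/\xm$. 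This smallness will be exploited via a Banach fixed-point iteration in the weighted space $a^{p_0}L^{\infty}(\mathbb{R}\times\xo)$.

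The perpendicular component is the easy half. Restricted to $\xf_1^{\perp}$, the angular operator $-\xD_{\mathbb{S}^{n-1}}-\xl$ has spectrum bounded below by the positive spectral gap $\xl_2-\xl>0$, which together with the $-\xe$ term gives a uniformly coercive operator on the cylinder. A maximum-principle/barrier argument using $\xf_2$ (or a perturbation thereof) near $\partial\xo$ and the $a^{p_0}$ weight in $t$ yields an $L^{\infty}$ estimate with constant independent of $\xe$, namely a bound of the type $\|d^{-1}a^{-p_0}u_{\perp}\|_{\infty}\le C\|a^{-p_0}g_{\perp}\|_{\infty}+C\xe\|a^{-p_0}u_{\parallel}\|_{\infty}$, where the last term is the feedback of the parallel mode.

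The parallel component is where the $1/\xe$ loss originates. After substitution, $u_{\parallel}$ solves the scalar ODE
$$u_{\parallel}''+Au_{\parallel}'+\bigl(-\xe+p\xm a^{p-1}(t)\bigr)u_{\parallel}=\tilde g_{\parallel}(t),$$
where $\tilde g_{\parallel}$ is $g_{\parallel}$ plus an $O(\xe)$ coupling from $u_{\perp}$. Differentiating (\ref{2.23}) shows that $a'(t)$ solves the homogeneous equation, so variation of parameters provides a Green's function built from $a'(t)$ and a second fundamental solution bounded at $-\infty$. The limiting ODE $y''+Ay'-\xe y=0$ at $t\to-\infty$ has characteristic roots $(-A\pm\sqrt{A^2+4\xe})/2$, whose positive root $\xi_{+}\sim\xe/A$ is the slow decay rate at $-\infty$. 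Convolving the Green's function against a source of size $a^{p_0}\sim e^{p_0\xi_{+}t}$ produces a factor $1/\xi_{+}\sim 1/\xe$; this is exactly the loss in \eqref{eq:phigp0}. When $g\equiv g_{\perp}$ is orthogonal to $\xf_1$, the source $\tilde g_{\parallel}$ reduces to the cross-coupling, which is $O(\xe)\|u_{\perp}\|$, so the $1/\xe$ cancels and the improved $\xe$-free bound follows.

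Combining the two mode estimates and choosing $\xe$ small enough that the total coupling constant is strictly less than $1$ gives existence and uniqueness of $G_{p_0}$ via Banach's theorem, together with both asserted estimates. The extra $d^{-1}$ on the left is then an elliptic Hopf-type upgrade: comparison with the barrier $Ca^{p_0}(t)\xf_1(\theta)$, which vanishes like $d(\theta)$ on $\partial\xo$ and satisfies $L_p$ applied to it up to absorbable error, yields $|u(t,\theta)|\le Ca^{p_0}(t)d(\theta)$. The main technical obstacle, as usual in Lyapunov–Schmidt analyses near heteroclinics, is the tight bookkeeping in the weighted $L^{\infty}$ norms: one must simultaneously track the slow decay $\xi_{+}\sim\xe/A$ at $-\infty$, the exponential decay at $+\infty$, and the boundary vanishing encoded in $d$, and verify that the cross-coupling between $u_{\parallel}$ and $u_{\perp}$ is compatible with all three asymptotic regimes and small enough to close the contraction.
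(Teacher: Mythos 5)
Your route --- decompose $u$ and $g$ along $\xf_1$ and its $L^2(\xo)$-orthogonal complement, solve the projected scalar ODE for $u_\parallel$ by variation of parameters around $a'$, control $u_\perp$ through the spectral gap, and close with a Banach contraction on the $O(\xe)$ cross-coupling --- is genuinely different from what the paper does. Following Lemma 2.6 of \cite{pino}, the paper introduces the barrier $(t,\theta)\mapsto e^{-\xd t}\xf_*(\theta)$, with $\xf_*$ the solution of \eqref{phi*}, solves $L_pu=g$ on finite cylinders $(t_1,t_2)\times\xo$, and then obtains \eqref{eq:phigp0} indirectly by a blow-up/compactness contradiction argument, the mode structure entering only when the limit is classified. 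Your constructive version makes the $1/\xe$ loss transparent, which is a merit, but two points that you dismiss as ``tight bookkeeping'' are in fact real gaps.

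First, the projected operator for $u_\parallel$, namely $\partial_t^2+A\partial_t-\xe+p\xm a^{p-1}(t)$, annihilates $a'$ (differentiate \eqref{2.23}); since $a'\sim\xd^- e^{\xd^-t}$ at $-\infty$ and decays exponentially at $+\infty$, the quotient $a'/a^{p_0}$ is bounded on $\mathbb{R}$ for every $p_0\le1$. As the statement allows $0\le p_0<\infty$, for $p_0\le1$ your parallel Green's function is only determined modulo $\mathbb{R}a'$, and the fixed-point iteration gives no mechanism to pick the correct representative; the asserted estimate is simply false for the wrong one. The full operator does have trivial kernel --- indeed $L_p(a'\xf_1)=p\,a^{p-1}a'\,\xf_1(\xf_1^{p-1}-\xm)\neq0$ --- but your projected scheme does not see this directly. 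You must either restrict to $p_0>1$, which is all the paper uses in Theorem \ref{mainprop} and Lemma \ref{mainlem}, or impose a normalization and justify that the iteration converges to the true solution of the full problem. Second, the barrier $Ca^{p_0}\xf_1$ you propose for the $d^{-1}$-weighted bound is not a supersolution: from \eqref{2.23},
\begin{align*}
(a^{p_0})''+A(a^{p_0})'-\xe a^{p_0}=p_0(p_0-1)a^{p_0-2}(a')^2+(p_0-1)\xe a^{p_0}-p_0\xm a^{p_0+p-1},
\end{align*}
and after adding the potential term $p\xf_0^{p-1}a^{p_0}\xf_1$ one is left with an expression of order $\xe a^{p_0}$ with no definite sign, so there is no ``absorbable error'' to absorb. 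The function $\xf_*$ from \eqref{phi*} is built precisely so that $L_p(e^{-\xd t}\xf_*)\le-\tfrac12 e^{-\xd t}$ after the $O(\xe)$ potential is swallowed; replacing $\xf_1$ by $\xf_*$ (and matching the exponential weight accordingly) would repair this step.
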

\begin{proof}
The proof follows the  same lines as in lemma 2.6 in \cite{pino}, so we will  only focus on the differences.
We first define $\xf_*$ to be  the positive solution of
\bea
\begin{cases}
\xD_{\mathbb{S}^{n-1}}\xf_*+\xl\xf_*+\xd(\xd-n-2\xg+2)\xf_*=&-1\qquad\mathrm{in}\;\xo\\
\xf_*=&\phantom{-}0\qquad\mathrm{on}\;\partial\xo
\end{cases}
\label{phi*}
\eea
see the proof of lemma 2.6 in \cite{pino} with obvious modifications.
Using the function $(t, \theta)\rightarrow e^{-\xd t}\xf_*(\theta)$
as a barrier, as done in the paper \cite{pino}, we can  show that, given any function $g$ such that $a^{-p_0}g\in L^\infty(\mathbb{R}\times\xo)$ and
given $t_1 < -1 < 1 < t_2,$ we can solve the equation
$$
L_pu = g
$$
in $(t_1, t_2)\times \xo$ with $0$ boundary conditions.

\

To prove the estimate \eqref{eq:phigp0}, we argue by contradiction,
assuming  that
$$||a^{-p_0}\psi_i||_{L^\infty}=1$$
and
$$\lim_{i\rightarrow\infty}||a^{-p_0}f_i||=0$$
we get a contradiction using similar argument as  in  lemma 2.6 in \cite{pino}.
The rest of the proof is the same as in  lemma 2.6 in \cite{pino} with obvious modifications so we omit it here.
\end{proof}
\begin{proof}[Proof of theorem \ref{mainprop}]
We look for  a solution to problem (\ref{pro3}) of the form
$$\xf=a(t)\xf_1(\theta)+\psi(t,\theta),$$
and we let $G_p$ to be the operator defined in proposition \ref{propest}. To conclude the proof, it is enough to find
a function $\psi$ solution of the fixed point problem
$$
\psi=-G_p(\mathcal{M}(\xf_0)+\mathcal{Q}(\psi)),
$$
where
\begin{eqnarray*}
\xf_0(t,\theta)&=&a(t)\xf_1(\theta), \\[3mm]
\mathcal{M}(\xf_0)&=&a^p\left(\xf_1^p-\xm\xf_1\right)\\[3mm]
\mathcal{Q}(\psi)&=&|\xf_0+\psi|^p-\xf^p_0-p\xf^{p-1}_0\psi.
\end{eqnarray*}
The rest of the proof is the same as in \cite{pino}. We recall here that $\psi<<a\xf_1.$
Also in \cite{pino}, they have proven that if $\xe$ is small enough then there exists $t_0$ such that for any $t\leq-\frac{t_0}{\xe},$
$$\frac{1}{2}e^{\xd^-t}\leq a(t)\leq e^{\xd^-t},$$ with  $\xd^-=\frac{1}{2}\left(\sqrt{A^2+4\xe}-A\right).$ And the result follows, since
$$\frac{1}{2}\left(\sqrt{A^2+4\xe}-A\right)+\frac{2}{p-1}=n+\xg-2.$$
\end{proof}
\begin{remark}
\label{remark}
\end{remark}
If $1<p_0<p$ is close enough to $p,$ we can apply a fix point argument like in the proof of theorem \ref{mainprop}, for the operator $G_{p_0}.$

In view of the proof of lemma \ref{f1}, $\xf_*=\xf_*(t,\cos (s\xb(\tau))).$

Thus if the function $g$ in proposition \ref{propest} is of the form $g=g(t,\cos (s\xb(\tau))),$ we have that the solution $u=G_{p_0}(g)$ is of the form
$u=u(t,\cos(s\xb(\tau))).$ Hence we obtain, that the solution $u_1$ in theorem \ref{mainprop} is of the form $$u_1=r^{-\frac{2}{p-1}}u_1(r,\cos (s\xb(\tau))).$$

\subsection{Regularity of the solution $u_1$ with respect $\tau$}\label{u1}

We first recall some definitions and known results, see the  book of Gilbarg and Trudinger \cite{gil} for the proofs.

Let
$$Lu=a^{i,j}(x)D_{i,j}u+b^i(x)D_iu+c(x)u=g(x),\qquad a^{i,j}=a^{j,i},$$
where the coefficients $a^{i,j}$, $b^i$, $c$  and the function $g$ are defined in an open bounded domain $\xO\subset\mathbb{R}^n$ and
$$a^{i,j}\xi_i\xi_j\leq\xm |\xi|^2;\quad \xm>0.$$

We assume that
$$||a^{i,j}||_{C^{2,a}},\;||b^{i}||_{C^{2,a}},\;||c||_{C^{2,a}}\leq\xL.$$
\begin{defin}
We say that a  bounded domain $\xO\subset\mathbb{R}^n$ and its boundary $\partial\xO$ are of class  $C^{k,a},\;0\leq a\leq1,$ if at each point $x\in\partial\xO$ there is a ball $B_r(x)$ and a one-to-one mapping $\psi$ from  $B_r(x)$ onto $D\subset\mathbb{R}^n$ such that:
$$\psi(B_r(x)\cap\xO)\subset \mathbb{R}^n_+,\; \psi(B_r(x)\cap\partial\xO)\subset\partial\mathbb{R}^n_+,\;\psi\in C^{k,a}(B_r(x))\;\mathrm{and}\;\psi^{-1}\in C^{k,a}(D).$$
A domain $\xO$ will be said to have a boundary portion $T\subset \partial\xO$ of class $C^{k,a},$ if at each
point $x\in T$ there is a ball $B_r(x)$ in which the above conditions are satisfied
and such that $B_r(x)\cap\partial\xO\subset T.$
\end{defin}
\begin{prop}{\textbf{(Lemma 6.18 in \cite{gil}})}. Let $0<a\leq1$ and $\xO$ be a domain with a $C^{2,a}$ boundary portion T, and let $\xf\in C^{2,a}(\overline{\xO}).$
Suppose that $u$ is a $C^2(\xO)\cap C_0(\overline{\xO})$ function satisfying
$Lu=g$ in $\xO$, $u=\xf$ on $T,$
where $g$ and the coefficients of the strictly elliptic operator $L$ belong to $C^a(\overline{\xO}).$ Then
$u\in C^{2,a}(\xO\cup T).$\label{gil1}
\end{prop}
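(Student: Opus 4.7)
My plan is to reduce the statement to a model problem on a half-ball and then invoke classical boundary Schauder theory. Since the conclusion is local at $T$, I would fix a point $x_0\in T$, use the $C^{2,a}$ chart $\psi$ coming from the definition of a $C^{2,a}$ boundary portion to straighten the boundary, and work with the pulled-back function $\tilde u=u\circ\psi^{-1}$ on $\psi(B_r(x_0)\cap\xO)\subset\mathbb{R}^n_+$. Because $\psi,\psi^{-1}\in C^{2,a}$, the transformed operator $\tilde L$ is uniformly elliptic with $C^a$ coefficients, the transformed right-hand side lies in $C^a$, and the transformed boundary data lies in $C^{2,a}$. So the question is reduced to a Dirichlet problem on $B_r^+=B_r\cap\{x_n>0\}$ whose data lives on the flat face $B_r\cap\{x_n=0\}$.

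Next I would subtract a $C^{2,a}(\overline{B_r^+})$ extension of the boundary data, turning $\xf$ into zero on the flat face. It then suffices to show that any $C^2(B_r^+)\cap C_0(\overline{B_r^+})$ solution $v$ of
\begin{equation*}
Lv=h\text{ in }B_r^+,\qquad v=0\text{ on }B_r\cap\{x_n=0\},
\end{equation*}
with $h\in C^a$ and coefficients of $L$ in $C^a$, belongs to $C^{2,a}$ up to the flat face. For this I would use the classical frozen-coefficient method: first establish the base-case estimate
\begin{equation*}
\|w\|_{C^{2,a}(B^+_{\rho/2})}\leq C\bigl(\|h\|_{C^a(B^+_\rho)}+\|w\|_{L^\infty(B^+_\rho)}\bigr)
\end{equation*}
for the constant-coefficient problem $a_0^{ij}\partial_{ij}w=h$ with zero Dirichlet data on the flat face; then freeze $L$ at a boundary point, split $Lv=L_0 v+(L-L_0)v$, apply the constant-coefficient estimate to $L_0$, and absorb the perturbation $(L-L_0)v$ using the smallness of $\|L-L_0\|$ on small balls, controlled by the $C^a$ modulus of the coefficients. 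Iterating over a finite covering of $T$ and combining with the standard interior Schauder estimate yields $u\in C^{2,a}(\xO\cup T)$.

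The main obstacle is the base-case boundary estimate, namely obtaining sharp $C^a$ control of $D^2 w$ up to the flat face for the constant-coefficient Dirichlet problem on the half-ball. The standard path is an odd reflection across $\{x_n=0\}$: the condition $w=0$ on the face is preserved, and the reflected function solves a constant-coefficient equation on the full ball whose right-hand side is the odd extension of $h$, so the problem collapses to the interior Schauder estimate. The delicate point is checking that the reflected $h$ is still $C^a$ with the correct norm and that the reflected second derivatives match across the face, which is where most of the honest work sits. Once that estimate is in hand, straightening the boundary, lifting $\xf$, and the absorbing argument are routine, and they combine to give exactly Proposition \ref{gil1}.
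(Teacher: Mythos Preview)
The paper does not prove this proposition at all: it is simply quoted verbatim as Lemma~6.18 from Gilbarg--Trudinger \cite{gil}, with no argument supplied. So there is no ``paper's own proof'' to compare your attempt against.

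That said, your sketch is the standard route and is essentially how the result is established in \cite{gil}: localize at a boundary point, flatten via the $C^{2,a}$ chart, subtract off the boundary data, and reduce to the half-ball boundary Schauder estimate for a constant-coefficient operator by freezing coefficients and absorbing the perturbation on small balls. Your identification of the odd-reflection step as the core technical point is accurate. One small caution: the odd reflection of $h$ across $\{x_n=0\}$ is not in general $C^a$ unless $h$ vanishes on the face, so the textbook argument actually proceeds slightly differently (via representation formulas and direct H\"older estimates for the Poisson kernel on the half-space, or via a more careful decomposition), rather than a naive reflection of the right-hand side. This is a known subtlety but does not invalidate the overall strategy.
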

\begin{prop} \textbf{(Corollary 6.7 in \cite{gil})}. Let $0<a\leq1$ and $\xO$ be a domain with a $C^{2,a}$ boundary portion T, and let $\xf\in C^{2,a}(\overline{\xO}).$
Suppose that $u$ is a $C^{2,a}(\xO\cup T)$ function satisfying $Lu=g$ in $\xO$, $u=\xf$ on $T.$ Then, if $x\in T$ and $B= B_\xr(x)$ is a ball with radius $\xr < \mathrm{dist} (x, \partial\xO - T),$ we have
$$||u||_{C^{2,a}(B\cap\xO)}\leq C(n,\xm,\xL,\xO\cap B_\xr(x))\left(||u||_{C(\xO)}+||\xf||_{C^{2,a}(\overline{\xO})}+||g||_{C^{a}(\xO)}\right).$$\label{gil2}

\end{prop}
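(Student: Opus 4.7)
The plan is to derive this boundary Schauder estimate by the standard localization plus boundary-flattening argument, reducing it to the flat half-space case already handled by Lemma 6.4 in \cite{gil}. First I would cover $\overline{B\cap\xO}$ by finitely many small neighborhoods, separating points whose neighborhoods lie entirely in $\xO$ from those meeting $T$. At the former, the classical interior Schauder estimate (Theorem 6.2 in \cite{gil}) already yields a $C^{2,a}$ bound by $\|u\|_{C(\xO)}+\|g\|_{C^a(\xO)}$ with constant depending only on $n$, $\xm$, $\xL$ and the scale. The substantive work is near a boundary point $x_0\in T$, and the hypothesis $\xr<\mathrm{dist}(x,\partial\xO\setminus T)$ is used precisely to ensure every boundary point of $B\cap\xO$ lies on $T$, so that the $C^{2,a}$ chart is available everywhere it is needed.

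For such $x_0$ I would apply the $C^{2,a}$ diffeomorphism $\psi$ from the definition of a $C^{2,a}$ boundary portion to straighten $\partial\xO$ locally onto an open piece of $\partial\mathbb{R}^n_+$. Under this change of variables the equation becomes
$$\tilde L\tilde u:=\tilde a^{ij}D_{ij}\tilde u+\tilde b^iD_i\tilde u+\tilde c\,\tilde u=\tilde g$$
on a flat half-ball, with $\tilde a^{ij},\tilde b^i,\tilde c\in C^{a}$ and Dirichlet data $\tilde\phi\in C^{2,a}$ on the flat portion; the ellipticity constant and the $C^{a}$ (respectively $C^{2,a}$) norms of the coefficients and data are controlled by $\xm$, $\xL$ and $\|\psi\|_{C^{2,a}}$. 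I would then absorb the inhomogeneous boundary data by setting $v=\tilde u-\tilde\phi$, so that $\tilde L v=\tilde g-\tilde L\tilde\phi$ in the half-ball and $v=0$ on the flat portion, with right-hand side in $C^a$ of norm bounded by $\|\tilde g\|_{C^a}+C\|\tilde\phi\|_{C^{2,a}}$.

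At this point the flat-boundary Schauder estimate (Lemma 6.4 / the flat analogue of Corollary 6.7 in \cite{gil}) applies and gives
$$\|v\|_{C^{2,a}}\leq C\bigl(\|v\|_{C^0}+\|\tilde g-\tilde L\tilde\phi\|_{C^a}\bigr).$$
Pulling back through $\psi$, whose $C^{2,a}$ norms are part of the data encoded in $\xO\cap B_\xr(x)$, converts this into a local $C^{2,a}$ estimate on $u$ of the required form. Finally I would sum the interior and finitely many boundary-patch estimates obtained from a standard covering of the compact set $\overline{B\cap\xO}$, using cutoff functions to glue them together, which yields the global inequality with constant depending on $n$, $\xm$, $\xL$ and on the local geometry captured by $\xO\cap B_\xr(x)$.

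The main obstacle is bookkeeping rather than analysis: one must verify that every constant introduced by the flattening, the reduction $v=\tilde u-\tilde\phi$, and the covering can be folded into the stated dependence $C(n,\xm,\xL,\xO\cap B_\xr(x))$, without picking up any implicit dependence on the global geometry of $\xO$. This is precisely why the hypothesis $\xr<\mathrm{dist}(x,\partial\xO\setminus T)$ is crucial; it localizes the estimate entirely within the portion of the boundary where the $C^{2,a}$ regularity is assumed, so the chart parameters are uniformly controlled on a fixed scale and the covering is finite with a fixed number of patches.
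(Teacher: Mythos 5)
The paper does not prove this proposition; it is quoted verbatim as Corollary 6.7 of Gilbarg--Trudinger \cite{gil} and used as a black box. Your sketch reproduces the standard textbook argument behind that corollary (localization, interior Schauder estimates away from $T$, boundary straightening via the $C^{2,a}$ chart, reduction to homogeneous data by subtracting $\tilde\phi$, the half-ball estimate of Lemma 6.4, and patching with a finite cover), which is precisely the route taken in \cite{gil}, so the approach matches the source the paper relies on.
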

We first prove the following result
\begin{lemma}
Let $\tau\in\mathbb{R}$ be fixed, $x\in\mathbb{R}^n,\;n\geq2,$ $g\in C^a(\overline{C_\xo}\setminus\{0\})$
and $u=G_p(g)$ be the operator in proposition \ref{propest}. Then
\bea
\nonumber
|\nabla_x u(\tau,x)|&\leq& C(n,p,\xl,C_{\xo(\tau)},g)\;|x|^{-1}\\
|D^2_xu(\tau,x)|&\leq& C(n,p,\xl,C_{\xo(\tau)},g)\;|x|^{-2}.\label{ass}
\eea\label{lemma11}
\end{lemma}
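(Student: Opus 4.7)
The plan is to view $u(x)=G_p(g)$ as the bounded solution $\psi$ of the linear elliptic problem $L_p\psi=g$ on the smooth cylinder $\mathbb{R}\times \xo(\tau)$, with zero Dirichlet data on $\mathbb{R}\times\partial\xo(\tau)$, under the diffeomorphism $(t,\theta)=(-\log|x|,\,x/|x|)$. Since this coordinate change is smooth away from the origin, with first-order Jacobian entries of size $|x|^{-1}$ and second-order ones of size $|x|^{-2}$, it is enough to produce a uniform bound on $\|\psi\|_{C^{2,a}}$ in the $(t,\theta)$ variables; the claimed pointwise estimates on $\nabla_xu$ and $D_x^2u$ then follow from the chain rule.

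To obtain the $C^{2,a}$ bound on $\psi$ I would apply the Schauder estimates recalled in Propositions \ref{gil1} and \ref{gil2}. The operator $L_p=\partial_t^2+A\partial_t-\xe+\xD_{\mathbb{S}^{n-1}}+\xl+p\xf_0^{p-1}$ is uniformly elliptic on $\mathbb{R}\times\xo(\tau)$; its only non-constant coefficient, $p\xf_0^{p-1}=p\,a(t)^{p-1}\xf_1(\theta)^{p-1}$, is Hölder continuous and bounded by the regularity of $a$ together with Lemma \ref{f1}; and the boundary $\mathbb{R}\times\partial\xo(\tau)$ is smooth since the spherical cap $\xo(\tau)$ has boundary $\mathbb{S}^{n-2}$. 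Fixing $(t_0,\theta_0)\in\overline{\mathbb{R}\times\xo(\tau)}$ and applying Proposition \ref{gil2} on a ball $B_\xr(t_0,\theta_0)$ of some fixed radius $\xr$ yields
\begin{equation*}
\|\psi\|_{C^{2,a}\left(B_\xr(t_0,\theta_0)\cap(\mathbb{R}\times\xo(\tau))\right)}\leq C\left(\|\psi\|_{L^\infty}+\|g\|_{C^a}\right),
\end{equation*}
with $C$ independent of $(t_0,\theta_0)$. The right-hand side is finite because Proposition \ref{propest} controls $\|\psi\|_{L^\infty}$ in terms of $\|g\|_{L^\infty}$.

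Finally, applying the chain rule to $u(x)=\psi(-\log|x|,\,x/|x|)$ one obtains
\begin{equation*}
\partial_{x_i}u=\partial_t\psi\cdot(-x_i/|x|^2)+\sum_j\partial_{\theta_j}\psi\cdot\partial_{x_i}\theta_j,
\end{equation*}
together with an analogous, longer expression for $D_x^2u$ involving the Hessians $D_x^2t$ and $D_x^2\theta_j$ (both of order $|x|^{-2}$) and products of first-order Jacobians (each of order $|x|^{-1}$). The Schauder bound above then immediately yields $|\nabla_xu(\tau,x)|\leq C|x|^{-1}$ and $|D_x^2u(\tau,x)|\leq C|x|^{-2}$, with $C$ depending only on $n,p,\xl,C_{\xo(\tau)}$ and $g$.

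The main obstacle is a bookkeeping one: ensuring that the Schauder constants can actually be chosen uniformly in $(t_0,\theta_0)\in\overline{\mathbb{R}\times\xo(\tau)}$. This reduces to the observations that $L_p$ has uniform ellipticity constant and uniform Hölder norm of the coefficient $p\xf_0^{p-1}$ on the entire cylinder, and that the smooth manifold $\partial\xo(\tau)$ admits uniform local coordinate charts of class $C^{2,a}$; once these are verified, the rest of the argument is routine.
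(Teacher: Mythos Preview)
Your argument is correct and is essentially the paper's own proof, just carried out in the cylinder coordinates $(t,\theta)$ rather than in $x$-coordinates. The paper writes the equation for $u$ in the cone, rescales via $y=x/R$ to the fixed annulus $\xO_1=\{\frac14<|y|<4\}\cap C_\xo$, applies the same Schauder estimates (Propositions \ref{gil1}--\ref{gil2}) there, and then reads off $\nabla v=R\nabla u$, $D^2v=R^2D^2u$; your translation invariance in $t$ is exactly their scale invariance in $R$, and your chain-rule factors $|x|^{-1},|x|^{-2}$ play the role of their $R^{-1},R^{-2}$.
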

\begin{proof}
 First we note that $||u(\tau,\cdot)||_{L^\infty(C_\xo(\tau))}\leq C||g(t,\cdot)||_{L^\infty(C_\xo(\tau))}$ and $u$ is a solution of
\begin{align}
\begin{cases}
-\xD_x u+\frac{4}{p-1}\frac{x\cdot \nabla_x u}{|x|^2}+\frac{2}{p-1}\left(n-\frac{2}{p-1}-2\right)\frac{u}{|x|^2}
-p\frac{\xf_0^{p-1}u}{|x|^2}=-\frac{g}{|x|^2},\quad\mathrm{in}\;\; C_{\xo(\tau)}&\\[3mm]
u=0\qquad\mathrm{in}\;\; \partial C_{\xo(\tau)}\setminus\{0\}.&
\end{cases}
\end{align}
Set $R=|x|$,  consider the domain $$\xO_{R}=\{y\in C_\xo:\;\frac{R}{4}<|y|<4R\},$$
 and let  $y=\frac{x}{R}$ and define $v(y)=u(\tau,Ry).$ Then $y\in\xO_1$ and $v$ is a solution of
\begin{align}
\begin{cases}
&-\xD v+\frac{4}{p-1}\frac{y\cdot \nabla v}{|y|^2}+\frac{2}{p-1}\left(n-\frac{2}{p-1}-2\right)\frac{v}{|y|^2}
-p\frac{\xf_0^{p-1}v}{|y|^2}=-\frac{g}{|y|^2},\quad\mathrm{in}\;\; \xO_1\\[3mm]
&v=0\qquad\mathrm{in}\;\; T,
\end{cases}
\end{align}
where we have set
$$T=\partial\xO_1\setminus\{y\in C_\xo:\;|y|=\frac{1}{4}\;\mathrm{or}\;|y|=4\}.$$
Let $0<\xe<\frac{\xr}{4}$ be small enough, where $\xr$ is the defined  in proposition \ref{gil2} with $\xO=\xO_1.$  Let $y_0\in \partial\xO_1\setminus\{y\in C_\xo:\;|y|=\frac{1}{6}\;\mathrm{or}\;|y|=\frac{8}{3}\}$ then by propositions \ref{gil1} and \ref{gil2} we have
\begin{align}
\nonumber
||v||_{C^{2}(B_\xr(\psi_0)\cap\xO_\frac{2}{3})}&\leq C(n,\xm,\xL,\xO_1\cap B_\xr(y_0))||g||_{C^a(\overline{\xO_1})}
\end{align}
where in the last inequality we have used the estimate in proposition \ref{propest}.

We note here that $\xr$ depends only on $\xO_1$ and not on $y_0.$ Thus if we apply a covering argument and standard interior Schauder estimates we have
$$||v||_{C^{2}(\xO_\frac{1}{2})}\leq C\left(n,\xm,\xL,\xO_1,\xr\right)||g(x)||_{C^a(\overline{\xO_1})}.$$
Using the facts that $x\in\xO_{\frac{R}{2}},$ $\nabla v(y)=R\,\nabla u(x),$ $D_{i,j}v=R^2D_{i,j}u,$ $R=|x|$ and the above estimate, the result follows at once.
\end{proof}

In the rest of this paper we assume  that the Lipschitz spherical cap $\xo(\tau)$ has the property:

\textit{there exists $\widetilde{\xe}>0,$ such that for any $p\in(\sup\limits_{\tau\in\mathbb{R}}p^*(\tau),\sup\limits_{\tau\in\mathbb{R}}p^*(\tau)+\widetilde{\xe}),$ there exists a solution $u_1$ of theorem \ref{mainprop}. Thus $\xe(\tau)$ is a smooth bounded function with bounded derivatives and there exist $\xe_0, \xe_1>0$ such that $\xe_0\leq\xe(\tau)\leq\xe_1,\;\forall \tau\in\mathbb{R}.$}

\

Now, we recall some facts from the proof of theorem \ref{mainprop}.
Let $a(\tau,t)$ be  the solution of the problem
\be
\partial^2_ta+ A\partial_ta-\xe(\tau) a+\xm(\tau) a^p=0,\label{ode}
\ee
where $A=-\left(n-2\frac{p+1}{p-1}\right),$ $\xe(\tau)=\xl(\tau)+\frac{2}{p-1}(n-\frac{2p}{p-1}),$ $\xm(\tau)=\int_{\xo(\tau)}\xf_1^{p+1}(\tau,\theta)\rd\theta$ and $\xm(\tau) a^{p-1}_\infty(\tau)=\xe(\tau).$ Recall also that we have chosen $a(\tau,t)$ such that
$$a(\tau,0)=\frac{1}{2}a_\infty(\tau),\qquad
\lim_{t\rightarrow\infty}a(\tau,t)=a_\infty(\tau),\quad\hbox{and }\quad \lim_{t\rightarrow-\infty}a(\tau,t)=0.$$

We next prove the following lemma
\begin{lemma}
Let $a$ be the solution of (\ref{ode}), $\xe_0=\inf\limits_{\tau\in\mathbb{R}}\xe(\tau),$
$$\widetilde{\xd}^+(\tau)=\frac{-A+\sqrt{A^2-4(p-1)\xe(\tau)}}{2}\qquad\mathrm{and}\qquad \xd^-(\tau)=\frac{-A+\sqrt{A^2+4\xe(\tau)}}{2}.$$

Then there exists $\widetilde{t}>0$ such that
\begin{eqnarray*}
\left|\frac{\partial a}{\partial \tau}(\tau,t)\right|&\leq& C(\xe_0,p,n)|t|e^{\xd^-(\tau)t},\qquad\forall (\tau,t)\in\mathbb{R}\times(-\infty,-\frac{\widetilde{t}}{\xe_0}),\\[3mm]
\left|\frac{\partial^2a}{\partial \tau^2}(\tau,t)\right|&\leq& C(\xe_0,p,n)|t|^2e^{\xd^-(\tau)t},\qquad \forall (\tau,t)\in\mathbb{R}\times(-\infty,-\frac{\widetilde{t}}{\xe_0})\\[3mm]
\left|\frac{\partial a}{\partial \tau}(\tau,t)\right|&\leq &C(\xe_0,p,n)|t|e^{\widetilde{\xd}^+(\tau)t},\qquad \forall (\tau,t)\in\mathbb{R}\times(\frac{\widetilde{t}}{\xe_0},\infty),\\[3mm]
\left|\frac{\partial^2a}{\partial \tau^2}(\tau,t)\right|&\leq& C(\xe_0,p,n)|t|^2e^{\widetilde{\xd}^+(\tau)t},\qquad\forall (\tau,t)\in\mathbb{R}\times(\frac{\widetilde{t}}{\xe_0},\infty).
\end{eqnarray*}

And

\begin{align}
\nonumber
\left|\frac{\partial a}{\partial \tau}(\tau,t)\right|&\leq C(\xe_0,p,n),\qquad \forall (\tau,t)\in\mathbb{R}\times[-\frac{\widetilde{t}}{\xe_0},\frac{\widetilde{t}}{\xe_0}],\\ \nonumber
\left|\frac{\partial^2a}{\partial \tau^2}(\tau,t)\right|&\leq C(\xe_0,p,n),\qquad\forall (\tau,t)\in\mathbb{R}\times[-\frac{\widetilde{t}}{\xe_0},\frac{\widetilde{t}}{\xe_0}].
\end{align}\label{odereg}

\end{lemma}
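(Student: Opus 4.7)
The plan is to differentiate the autonomous ODE \eqref{ode} with respect to the parameter $\tau$ and then obtain the pointwise bounds on $b:=\partial_\tau a$ and $c:=\partial^2_\tau a$ from carefully chosen barriers in the three regimes $t\ll 0$, $t$ bounded, and $t\gg 0$. Differentiating \eqref{ode} once gives the linear inhomogeneous equation
\begin{equation*}
b_{tt}+A\,b_t+\bigl(p\,\mu(\tau)a^{p-1}-\varepsilon(\tau)\bigr)\,b=\varepsilon'(\tau)\,a-\mu'(\tau)\,a^p,
\end{equation*}
whose coefficients and forcing are controlled by the smoothness of $\varepsilon,\mu$ and the already-established bounds on $a$.

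In the left regime $t<-\widetilde t/\varepsilon_0$, the bound $a(\tau,t)\le e^{\delta^-(\tau)t}$ recalled from the proof of Theorem \ref{mainprop} makes the zeroth-order coefficient essentially $-\varepsilon(\tau)$ and the forcing of order $e^{\delta^-(\tau)t}$. The associated characteristic equation $r^{2}+Ar-\varepsilon(\tau)=0$ has roots $\delta^{-}(\tau)$ and $-A-\delta^{-}(\tau)$, so the forcing is \emph{resonant} with the exponentially growing homogeneous mode; this forces a particular solution of size $|t|e^{\delta^-(\tau)t}$. I would therefore construct the barrier $\Phi_{-}(\tau,t):=C_{1}|t|e^{\delta^{-}(\tau)t}$ and, after fixing $C_{1}$ large in terms of $\varepsilon_{0}, p, n$, verify by direct substitution that $\pm b\le\Phi_{-}$ on the left interval, using as boundary data at $t=-\widetilde t/\varepsilon_{0}$ a uniform bound on $b$ obtained from standard ODE continuous-parameter dependence on the compact middle window $[-\widetilde t/\varepsilon_{0},\widetilde t/\varepsilon_{0}]$. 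For the right regime $t>\widetilde t/\varepsilon_{0}$, one linearizes around $a_{\infty}(\tau)$: setting $\widetilde a=a-a_{\infty}(\tau)$ one gets $|\widetilde a|\le Ce^{\widetilde\delta^{+}(\tau)t}$ from the arguments of \cite{pino}, and differentiation in $\tau$ produces an equation for $\partial_{\tau}\widetilde a$ with characteristic roots $\widetilde\delta^{+}(\tau)$ and $-A-\widetilde\delta^{+}(\tau)$, again with forcing resonant with the slowest decaying mode, giving the barrier $C_{2}|t|e^{\widetilde\delta^{+}(\tau)t}$.

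For the second derivative, one differentiates the $b$-equation once more to obtain a linear ODE for $c=\partial^2_\tau a$ in which the forcing now contains, besides the explicit terms $\varepsilon''a-\mu''a^{p}$, quadratic expressions of the form $b\cdot\partial_{\tau}(p\mu a^{p-1})$ bounded (by the first step) by $C|t|e^{\delta^{-}t}$ on the left and $C|t|e^{\widetilde\delta^{+}t}$ on the right. Thus the forcing on $c$ is again resonant but one order worse, and the natural barrier is $C|t|^{2}e^{\delta^{-}(\tau)t}$ respectively $C|t|^{2}e^{\widetilde\delta^{+}(\tau)t}$, giving the stated quadratic-in-$|t|$ growth factor. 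The middle-window bound follows once more from smooth dependence of the solution of \eqref{ode} on the parameters $\varepsilon(\tau),\mu(\tau)$ together with the normalization $a(\tau,0)=\tfrac12 a_{\infty}(\tau)$.

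The main obstacle is the double resonance phenomenon: both $\delta^-(\tau)$ and $\widetilde\delta^+(\tau)$ are roots of the linearized characteristic polynomials, so each $\tau$-differentiation costs one power of $|t|$, and the constants in the barriers must be chosen uniformly in $\tau\in\mathbb R$. Ensuring uniformity requires that $A$, $\varepsilon(\tau)$, and the discriminants $A^{2}+4\varepsilon$ and $A^{2}-4(p-1)\varepsilon$ stay bounded away from $0$, which follows from $\varepsilon_{0}\le\varepsilon(\tau)\le\varepsilon_{1}$ and from $p$ being sufficiently close to $\sup_{\tau}p^{*}(\tau)$ so that $A>0$ uniformly; the smoothness and uniform boundedness of $\varepsilon(\tau)$, $\mu(\tau)$ and their derivatives (which in turn rely on Lemma \ref{f1}) then give uniform constants in the barrier arguments.
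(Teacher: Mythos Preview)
Your identification of the mechanism is exactly right: differentiating \eqref{ode} in $\tau$ gives a linear inhomogeneous equation for $b=\partial_\tau a$ whose forcing is resonant with the slowest homogeneous mode at each end, and this resonance is precisely what produces the extra factors of $|t|$ (and $|t|^2$ after a second differentiation). That is also the heart of the paper's argument.

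Where you diverge from the paper is in the implementation. For the left regime the paper does not run a barrier/maximum-principle argument; instead it writes $a(\tau,t)=a_\infty(\tau)\bigl(e^{\delta^-(\tau)t}+w(\tau,t)\bigr)$, represents $w$ as the fixed point of an explicit double-integral operator, and applies the Implicit Function Theorem in a weighted $L^\infty$ space to obtain \emph{simultaneously} the existence of $\partial_\tau w,\partial_\tau^2 w$ and the stated bounds. For the right regime the paper sets $a=a_\infty w$, notes that $\partial_t w$ solves the homogeneous linearized equation, constructs the second homogeneous solution, and reads off the bound on $\partial_\tau w$ from the variation-of-parameters representation formula. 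For the middle window the paper uses difference quotients plus Arzel\`a--Ascoli and uniqueness, with the initial data at a finite point $t_0$ \emph{supplied by} the left-regime analysis.

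Your barrier proposal has a genuine gap on exactly this point. A comparison argument for the second-order operator $L=\partial_t^2+A\partial_t+(p\mu a^{p-1}-\varepsilon)$ on the half-line $(-\infty,-\widetilde t/\varepsilon_0]$ needs two inputs you do not yet have: (i) that $b=\partial_\tau a$ actually exists on the whole half-line (smooth dependence of a \emph{heteroclinic} on parameters is a global statement, not covered by finite-time ODE theory), and (ii) that $b$ decays at $-\infty$, so that the maximum principle applies on the unbounded interval. You take the boundary datum for the left regime from the middle window, but the middle-window estimate itself requires knowing $\partial_\tau a$ and $\partial_\tau\partial_t a$ at some finite $t_0$, which is circular unless one tail is handled first by a method that yields existence as well as bounds. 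The paper breaks this circularity via the Implicit Function Theorem on the integral equation; if you want to keep a comparison-style argument you would first need an equivalent fixed-point formulation (or variation of parameters with the explicit homogeneous solutions $e^{\delta^- t}$ and $e^{-(A+\delta^-)t}$), at which point the argument essentially becomes the paper's.
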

\begin{proof}
By our assumptions and lemma 2.5 in \cite{pino} there exists a constant $\overline{t}<0$ (independent on $p,$  $\xm$ and $\tau$) such that
$$\frac{1}{2}e^{\xd^-(\tau)t}\leq \frac{a(\tau,t)}{a_\infty(\tau)}\leq e^{\xd^-(\tau)t},\qquad\forall t\leq\frac{\overline{t}}{\xe_0},$$
where
$$\xd^-(\tau)=\frac{-A+\sqrt{A^2+4\xe(\tau)}}{2}.$$
Choose  $\tau_0\in\mathbb{R}$ and set $a(\tau,t)=a_\infty(\tau)(e^{\xd^-(\tau)t}+w(\tau,t)).$ Then $w$ is a solution of the fixed point problem
\begin{align}\nonumber
w&=-\xe e^{\xd^-(\tau)t}\int_{-\infty}^te^{-2\xd^-(\tau)\xz-A\xz}\left(\int_{-\infty}^\xz e^{\xd^-(\tau)s+As}\left(e^{\xd^-(\tau)s}+w\right)^p\rd s\right)\rd\xz\\
&:=T[w].\label{fix}
\end{align}
Indeed, let $1<p_0<p$ and $\rho$ be sufficiently  small  such that for any $\tau\in O_{\tau_0}=\{\tau\in\mathbb{R}:\;|\tau-\tau_0|<\rho\}$ we have
$$p\xd^-(\tau)\geq p_0\xd^-(\tau_0)\qquad\mathrm{and}\qquad p\xd^-(\tau_0)\geq p_0\xd^-(\tau).$$
Thus, it is easy to find a fixed point in the set of functions defined in $(-\infty,\frac{\overline{t}}{\xe_0})$ and
satisfying $$|w|\leq\frac{1}{2}e^{p_0\xd^-(\tau_0)t}$$ provided $|\overline{t}|$ is fixed large enough (independent of p and $\tau$).

Now let
$$G=\{g:(-\infty,\frac{\overline{t}}{\xe_0})\mapsto\mathbb{R}:\;||e^{-p_0\xd^-(\tau_0)t}g||_{L^\infty(-\infty,\frac{\overline{t}}{\xe_0})}<C\}$$ and define  $F(\tau,g)=g-T(g).$ By (\ref{fix}) we can apply the Implicit Function theorem in the domain $O_{\tau_0}\times G$ to obtain that there exists a unique function $w$ such that
$$F(\tau,w(\tau,t))=0\qquad \hbox{ for any } \quad |\tau-\tau_0|<\rho_0<\rho$$
for some  $\rho_0$ small enough. On the other hand  since $T(g)$ is smooth with respect $\tau$ we have that $w(\tau,t)$ is smooth with respect $\tau.$

\

Notice that $$0=F_\tau(\tau,w(\tau,t))+F_g(\tau,w(\tau,t))\frac{\partial w}{\partial \tau}$$
 thus we have
\be
\left|\frac{\partial w}{\partial \tau}(\tau,t)\right|\leq C(\xe_0,p,n)|t|e^{\xd^-t},\label{bound1}
\ee

provided $|\overline{t}|$ is fixed large enough. Similarly we have
\be
\left|\frac{\partial ^2w}{\partial \tau^2}(\tau,t)\right|\leq C(\xe_0,p,n)|t|^2e^{\xd^-t}.\label{bound2}
\ee
By (\ref{fix}) and the above inequalities we have that the derivatives $\frac{\partial^2 w}{\partial \tau\partial t},\;\frac{\partial^3 w}{\partial^2 \tau\partial t}$ exist and are bounded.

Since the choice of $\tau_0$ is abstract, we conclude that the functions $a,\;\partial_ta\in C^2$ with respect $\tau,$ for any $t\leq\frac{\overline{t}}{\xe_0}.$ We  also have
\begin{align}
\nonumber
\left|\frac{\partial a}{\partial \tau}(\tau,t)\right|&\leq C(\xe_0,p,n)|t|e^{\xd^-(\tau)t},\qquad\forall (\tau,t)\in\mathbb{R}\times(-\infty,-\frac{\widetilde{t}}{\xe_0}),\\
\left|\frac{\partial^2a}{\partial \tau^2}(\tau,t)\right|&\leq C(\xe_0,p,n)|t|^2e^{\xd^-(\tau)t},\qquad \forall (\tau,t)\in\mathbb{R}\times(-\infty,-\frac{\widetilde{t}}{\xe_0}).\label{in1}
\end{align}

Let $t_0\in\left(-\infty,\frac{\overline{t}}{\xe_0}\right)$ such that $a(\tau,t_0),\;\frac{\partial a(\tau,t_0)}{\partial t}\in C^2$ with respect $\tau.$ Using standard ODE techniques we can prove that, if $|h|$ is sufficiently small then
\be
|a(\tau,t)-a(\tau+h,t)|\leq C(t)h,\qquad\forall t\in\mathbb{R},\label{in2}
\ee
where $C(t)$ is a positive smooth function such that $\lim\limits_{t\rightarrow\infty}C(t)=\infty.$

Choose $|h|$ sufficiently small and set $v_h=\frac{a(\tau+h,t)-a(\tau,t)}{h}$ and $a(\tau)=a(\tau,t).$ Then $v_h$ satisfies
\begin{eqnarray}\label{main}
&&\frac{\partial^2v_h}{\partial t^2}+A\frac{\partial v_h}{\partial t}-\xe(\tau+h) v_h=-\xm(\tau+h)\frac{a^p(\tau+h)-a^p(\tau)}{h}\nonumber \\ \nonumber
&&\qquad\qquad-\frac{\xm(\tau+h)-\xm(\tau)}{h}a^p(\tau)
+\frac{\xe(\tau+h)-\xe(\tau)}{h}a(\tau),\qquad\mathrm{in}\;(t_0,\infty),\\
&&v_h(\tau,t_0)= \frac{a(\tau+h,t_0)-a(\tau,t_0)}{h},\\
&&\frac{\partial v_h(\tau,t_0)}{\partial t}=\frac{\frac{\partial a(\tau+h,t_0)}{\partial t}-\frac{\partial a(\tau,t_0)}{\partial t}}{h}. \nonumber
\end{eqnarray}
Using the following expansion
\begin{align}\nonumber
a^p(\tau+h)&=a^p(\tau)+pa^{p-1}(\tau,t)\left(a(\tau+h)-a(\tau)\right)\\ \nonumber
&+\frac{1}{2}\int_{a(\tau)}^{a(\tau+h)}p(p-1)t^{p-2}(a(\tau+h)-t)\rd t,
\end{align}
thus by  the properties of initial data in (\ref{main}), our assumptions on $\xm,\;\xe,$  (\ref{in2}) and above equality, we can obtain by using standard ODE techniques in (\ref{main}) that
$$|v_h|,\quad|\frac{\partial v_h}{\partial t}|<C(t),$$
 where $C(t)$ is a positive smooth function such that $\lim\limits_{t\rightarrow\infty}C(t)=\infty.$ Thus by Arzela Ascoli theorem, there exist a subsequence $\{v_{h_n}\}$ such that $v_{h_n}\rightarrow v$ locally uniformly and $v$ satisfies
\begin{align}
\nonumber
&\frac{\partial^2v}{\partial t^2}+A\frac{\partial v}{\partial t}-\xe(\tau) v=-\xm(\tau)pa^{p-1}(\tau,t)v-\xm'(\tau)a^p(\tau)+\xe'(\tau)a(\tau)\quad\hbox{in} \;(t_0,\infty)\\[3mm] \nonumber
&v(\tau,t_0)= \frac{\partial a(\tau,t_0)}{\partial \tau}\\[3mm] \nonumber
&\frac{\partial v(\tau,t_0)}{\partial t}=\frac{\partial^2 a(\tau,t_0)}{\partial \tau\partial t}.
\end{align}
By uniqueness of the above problem, we have that $\lim\limits_{h\rightarrow0}v_h=v$ for all $\tau\in\mathbb{R}$ and $t\geq t_0.$ And thus $\frac{\partial}{\partial \tau}a(\tau,t)$  exists for any $(\tau,t)\in \mathbb{R}^2.$  Applying  the same argument we can obtain also that $\frac{\partial^2}{\partial \tau^2}a(\tau,t)$ exists for any $(\tau,t)\in \mathbb{R}^2.$ The only difference is that we should use the fact that $ a(\tau,t)>c>0$ for any $(\tau,t)\in\mathbb{R}\times(t_0,\infty).$

\

Set $a=a_\infty w$ then $w$ satisfies
\be
\partial^2_tw+ A\partial_tw-\xe(\tau) w+\xe(\tau) w^p=0.\label{ode*}
\ee
Let us now recall some facts from lemma 2.5 in \cite{pino}.
Set
$$\widetilde{\xd}^+(\tau)=\frac{-A+\sqrt{A^2-4(p-1)\xe(\tau)}}{2}\;\;\mathrm{and}\;\;\widetilde{\xd}^-(\tau)=\frac{-A-\sqrt{A^2-4(p-1)\xe(\tau)}}{2}.$$
There exists a $\widehat{t}>0$ (independent  on $p$ and $\tau$) such that , $\forall \,t\,\geq\,\frac{\widehat{t}}{\xe_0}$
\begin{align}\nonumber
&\frac{1}{2}e^{\widetilde{\xd}^-(\tau)t}\leq 1-w(\tau,t)\leq 2e^{\widetilde{\xd}^-(\tau)t}\\
&\frac{1}{C(\xe_0)}w(1-w)\leq\frac{\partial w}{\partial t}\leq C(\xe_0) w(1-w).\label{2}
\end{align}

Notioce that the function $\frac{\partial w}{\partial \tau}$ is a solution of
\be
\frac{\partial^2v}{\partial t^2}+A\frac{\partial v}{\partial t}-\xe(\tau) v+pw^{p-1}(\tau,t)v=\xe'(\tau)w^p(\tau)+\xe'(\tau)w(\tau),\label{1}
\ee
but the function $\frac{\partial a}{\partial t}$ is one solution of the corresponding  homogeneous problem. For the other solution of the homogeneous problem $\psi$ we can easily prove by using (\ref{2}) that
$$|\psi(t,\tau)|\leq C(\xe_0) e^{\widetilde{\xd}^-(\tau)t}.$$

Thus by the representation formula and the properties of $w,$ we can easily get
$$\left|\frac{\partial w}{\partial \tau}\right|\leq C(\xe_0,p,n)|t|e^{\widetilde{\xd}^+(\tau)t},\qquad  \forall\;t\geq\frac{\widetilde{t}}{\xe_0}.$$
Using the estimates (\ref{2}) and the fact that $w$ is a solution of (\ref{ode*}), we can prove that
$$\left|\frac{\partial^2w}{\partial t^2}\right|< C(\xe_0,n,p)e^{\widetilde{\xd}^+(\tau)t}.$$

Setting $w=\left(1-e^{\widetilde{\xd}^+(\tau)t}+v\right)$, then $v$ can be written (see appendix in \cite{pino})
\begin{align}\nonumber
v&=\xe e^{\widetilde{\xd}^-(\tau)t}\int_{t_p}^te^{-2\widetilde{\xd}^-(\tau)\xz-A\xz}\left(\int_{\xz}^\infty e^{\widetilde{\xd}^-(\tau)s+As}\mathcal{Q}\left(-e^{\widetilde{\xd}^+(\tau)s}+v\right)\rd s\right)\rd\xz\\
&+\xl_pe^{\widetilde{\xd}^-(\tau)t}.\label{fix1}
\end{align}
where $Q(x)=|1+x|^p-1-px,$ $t_p$ is large enough and $\xl_p(\tau)$ is a smooth bounded function. Thus by (\ref{fix1}) and the definition of $v$ we can prove that
there exists a constant $C>0$ such that
$$\frac{1}{C}e^{\widetilde{\xd}^+(\tau)t}\leq-\partial_t^2w(\tau,t)\leq C e^{\widetilde{\xd}^+(\tau)t},\qquad\forall t\geq t_p.$$
By the same argument  we can prove that
$$\left|\frac{\partial^2w}{\partial \tau^2}(\tau,t)\right|\leq C(\xe_0,p,n)|t|^2e^{\widetilde{\xd}^+(\tau)t},\qquad  \forall\;t\geq\frac{\widetilde{t}}{\xe_0}.$$
This ended the proof.
\end{proof}

\begin{lemma}
Let $u_1$ be the solution given by theorem \ref{mainprop}, then the following estimates hold
$$|\partial_\tau u_1(\tau,x)|\leq C|x|^{-\frac{2}{p-1}}\qquad\mathrm{and}\qquad|\partial_\tau^2 u_1(\tau,x)|\leq C|x|^{-\frac{2}{p-1}},$$
where the constant $C$ does not depend on $\tau$ and $x.$\label{mainlem}
\end{lemma}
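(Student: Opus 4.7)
The plan is to reduce both pointwise bounds to uniform estimates on $\partial_\tau\xf$ and $\partial^2_\tau\xf$, where $u_1(\tau,x)=|x|^{-\frac{2}{p-1}}\xf(\tau,-\log|x|,\theta)$ with the decomposition $\xf=a(\tau,t)\xf_1(\tau,\theta)+\psi(\tau,t,\theta)$ used in the proof of theorem \ref{mainprop}. Since $\partial_\tau u_1(\tau,x)=|x|^{-\frac{2}{p-1}}(\partial_\tau\xf)(\tau,-\log|x|,\theta)$ (and analogously for the second derivative), it is enough to show that $|\partial_\tau\xf|$ and $|\partial^2_\tau\xf|$ are uniformly bounded on $\mathbb{R}\times\mathbb{R}\times\xo(\tau)$.

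For the explicit piece $a\xf_1$ the claim follows at once from lemmas \ref{f1} and \ref{odereg}: the factor $\xf_1$ and its first two $\tau$-derivatives are uniformly bounded, $a$ lies in $[0,a_\infty(\tau)]$ with $a_\infty$ bounded uniformly in $\tau$, and the bounds on $\partial_\tau a,\partial^2_\tau a$ from lemma \ref{odereg} are uniformly bounded in $(\tau,t)$ because $\xd^-(\tau)>0$ and $\widetilde{\xd}^+(\tau)<0$ in the range of $p$ we consider, so both $|t|e^{\xd^-(\tau)t}$ (as $t\to-\infty$) and $|t|^2 e^{\widetilde{\xd}^+(\tau)t}$ (as $t\to+\infty$) are bounded in $t$ uniformly in $\tau$. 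The Leibniz rule then gives $|\partial^j_\tau(a\xf_1)|\leq C$ for $j=1,2$.

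The real work is controlling $\partial_\tau\psi$ and $\partial^2_\tau\psi$. The immediate obstruction is that the spherical cap $\xo(\tau)$ moves with $\tau$, so differentiating in $\tau$ on $\mathbb{R}\times\xo(\tau)$ must be interpreted via a change of variables. I use the angular parametrization $\theta_1=\cos(s\xb(\tau))$, $s\in(0,1)$, already exploited in the Remark after theorem \ref{mainprop}, which rewrites the equation on a $\tau$-independent domain $\mathbb{R}\times(0,1)$ with coefficients depending smoothly on $\tau$ through $\xb(\tau),\xl(\tau),\xe(\tau)$. On this fixed domain $\psi$ vanishes on the (fixed) boundary, hence $\partial_\tau\psi,\partial^2_\tau\psi$ also satisfy homogeneous Dirichlet conditions. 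Differentiating the identity $L_p\psi=-\mathcal{M}(\xf_0)-\mathcal{Q}(\psi)$ from the proof of theorem \ref{mainprop} in $\tau$ yields
\begin{equation*}
L_p(\partial_\tau\psi)=-\partial_\tau\mathcal{M}(\xf_0)-\partial_\tau\mathcal{Q}(\psi)-(\partial_\tau L_p)\psi,
\end{equation*}
which I recast as a fixed-point equation $\partial_\tau\psi=-G_{p_0}(\mathrm{RHS})$ for the operator $G_{p_0}$ of proposition \ref{propest} with some $1<p_0<p$ close to $p$ (permitted by the Remark). The RHS lies in $a^{p_0}L^\infty$ because $\partial_\tau\mathcal{M}(\xf_0)=O(a^p)$ from the definition of $\mathcal{M}$ together with the bounds on $\partial_\tau a,\partial_\tau\xf_1$; $\partial_\tau\mathcal{Q}(\psi)$ splits into a linear piece $\mathcal{Q}'(\psi)\partial_\tau\psi$ absorbed by the smallness of $p\xf_0^{p-1}$ built into $L_p$, plus explicit terms controlled through $\psi\ll a\xf_1$; and $(\partial_\tau L_p)\psi$ is multiplication by smooth bounded functions of $\tau$ times $\psi$. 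Estimate \eqref{eq:phigp0} of proposition \ref{propest} then gives $\|a^{-p_0}\partial_\tau\psi\|_{L^\infty}\leq C$, so $|\partial_\tau\psi|\leq C$ uniformly because $a$ is bounded above.

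Once $\partial_\tau\psi$ is in hand, the same scheme — differentiate once more in $\tau$, collect the new source terms (which now involve $\partial_\tau\psi$ already controlled in the previous step), and invert by $G_{p_0}$ — yields $|\partial^2_\tau\psi|\leq C$. Combining this with the uniform bound on $\partial^j_\tau(a\xf_1)$ concludes the proof. The principal obstacle throughout is the bookkeeping in verifying that each term produced by $\partial_\tau$ (acting on the coefficients of $L_p$, on the moving-domain change of variables, and on the nonlinear $\mathcal{Q}$) remains in the weighted class $a^{p_0}L^\infty$ uniformly in $\tau$; the required exponential decay at $t=\pm\infty$ and the boundary behavior are exactly what lemmas \ref{f1}, \ref{odereg} and proposition \ref{propest} provide.
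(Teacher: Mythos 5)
Your strategy follows the paper's own route closely: the same splitting $\xf=a\xf_1+\psi$, the same change of variables $\theta_1=\cos(s\xb(\tau))$ to work on a $\tau$-independent domain, and inversion of the linearized operator via $G_{p_0}$ combined with lemmas \ref{f1} and \ref{odereg} for the explicit piece. The estimate for $a\xf_1$ is essentially correct, and your observation that $\xd^->0$, $\widetilde{\xd}^+<0$ makes $|t|e^{\xd^-t}$ and $|t|^2e^{\widetilde{\xd}^+t}$ bounded is right.

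However, there is a genuine gap in the treatment of $(\partial_\tau L_p)\psi$. You claim this term is ``multiplication by smooth bounded functions of $\tau$ times $\psi$,'' but that is not so. Writing the operator in the $(t,s)$ coordinates as in \eqref{eq}, differentiating the coefficients $\frac{1}{\xb^2(\tau)}$, $\frac{(n-2)\cos(\xb(\tau)s)}{\xb(\tau)\sin(\xb(\tau)s)}$, etc.\ in $\tau$ produces terms of the form
$$
-\frac{2\xb'(\tau)}{\xb^3(\tau)}\,\partial_s^2\psi
\quad\text{and}\quad
(n-2)\left(-\frac{\xb'(\tau)}{\xb^2(\tau)}\cot(\xb(\tau)s)-\frac{s\,\xb'(\tau)}{\sin^2(\xb(\tau)s)}\right)\partial_s\psi,
$$
i.e.\ the right-hand side for $\partial_\tau\psi$ contains first and second \emph{angular} derivatives of $\psi$, one of them carrying a $1/\sin$-type singularity at $s=0$. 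You cannot invert $G_{p_0}$ on this source term without first knowing that $\sup_\tau\|\partial_s^2\psi\|_{L^\infty}$ and $\sup_\tau\|\frac{1}{\sin(\xb(\tau)s)}\partial_s\psi\|_{L^\infty}$ are finite, and these bounds are not a consequence of proposition \ref{propest} alone. The paper establishes exactly these estimates (display \eqref{mest}) by going back to the $x$-variable, rescaling the annulus to unit size, and invoking the boundary Schauder estimates of propositions \ref{gil1}--\ref{gil2} through lemma \ref{lemma11}, which gives $|\nabla_x u|\lesssim|x|^{-1}$ and $|D_x^2u|\lesssim|x|^{-2}$ with constants independent of $\tau$. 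Without this elliptic-regularity input your bootstrap stalls at the very first term you need to control. A secondary, more minor point: differentiating the fixed-point identity in $\tau$ presupposes that $\partial_\tau\psi$ exists, which itself requires either a difference-quotient/compactness argument (as the paper does in the first half of its proof, using Arzela--Ascoli) or a genuine implicit-function-theorem step in the Banach space $\mathbf{X}$; you gesture at the latter but do not carry it out, whereas the paper executes both pieces carefully.
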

\begin{proof}
In view of the proof of theorem \ref{mainprop},
$$u_1=|x|^{-\frac{2}{p-1}}f(\tau,\theta)=|x|^{-\frac{2}{p-1}}\left(a(\tau,t)\xf_1(\tau,\theta)+\psi(\tau,\theta)\right),$$
where $\psi$ is a solution of the fixed point problem
\be
\psi=-G_p(\mathcal{M}(\xf_0)+\mathcal{Q}(\psi)),\label{meq}
\ee
where $\xf_0(\tau,\theta)=a(\tau,t)\xf_1(\tau,\theta),$ $\mathcal{M}(\xf_0)=a^p\left(\xf_1^p-\xm\xf_1\right)$ and $$\mathcal{Q}(\psi)=|\xf_0+\psi|^p-\xf^p_0-p\xf^{p-1}_0\psi.$$

We recall here that $|\psi(t,\theta)|<<a(\tau,t)\xf_1(\tau,\theta).$

Here we will only treat  the case $n\geq3$.  For $n=2$ the proof is the same.

By uniqueness, our assumptions on $\xo(\tau),$ and remark \ref{remark}. $\psi=\psi(t,\widetilde{s}),$ $\widetilde{s}\in (0,\xb(\tau)),\;\theta_1=\cos\widetilde{s},$ where $\xb(\tau)$ is a positive smooth function such that $$0<\inf_{\tau\in\mathbb{R}}\xb(\tau)\leq\sup_{\tau\in\mathbb{R}}\xb(\tau)<\pi.$$ Then $\psi$ satisfies
\begin{align}\nonumber
&\left(\partial^2_t
+A\partial_t-\xe(\tau)\right)\psi+\sin^{2-n} (\widetilde{s})\partial_{\widetilde{s}}\left(\sin^{n-2} (\widetilde{s})\partial_{\widetilde{s}}\psi\right)+\xl(\tau)\psi +p\xf_0^{p-1}\psi\\ \nonumber
&=-\mathcal{M}(\xf_0)-{Q}(\psi),
\end{align}
for any $(t,\widetilde{s})\in\mathbb{R}\times(0,\xb(\tau)),$ and $\psi(t,\xb(\tau))=0.$

Setting now $s=\frac{\widetilde{s}}{\xb(\tau)},$ we have that $\psi(\tau,t,s)$ satisfies
\begin{align}\nonumber
\widetilde{L}_p\psi:&=\left(\partial^2_t+A\partial_t-\xe(\tau)\right)\psi+\frac{1}{\xb^2(\tau)}\partial_s^2\psi\\ &+(n-2)\frac{\cos(\xb(\tau)s)}{\xb(\tau)\sin(\xb(\tau)s)}\partial_s\psi+\xl\psi
+p\xf_0^{p-1}\psi=-\mathcal{M}(\xf_0)-{Q}(\psi),\label{eq}
\end{align}
for any $(t,s)\in\mathbb{R}\times(0,1),$ and $\psi(\tau,t,1)=0.$

Let $1<p_0<p$ such that $p-p_0$ is small enough and  let $g:\mathbb{R}\times(0,1)\rightarrow\mathbb{R}$ such that $g\in C^a(\mathbb{R}\times[0,1])$
for some $0<a\leq1,$ and
$$\sup_{\tau\in\mathbb{R}}\;\sup_{(t,s)\in\mathbb{R}\times(0,1)}|a^{-p}(\tau,t)g(t,s)|<\infty.$$

Let $u(\tau,t,s)=-\widetilde{G}_p(\mathcal{M}(\xf_0)+\mathcal{Q}(g))$
be the solution of (\ref{eq}). This solution exists since problem (\ref{eq}) is equivalent to (\ref{meq}). In addition, by proposition \ref{propest} we have the following estimate
\begin{align}\nonumber
\sup_{(t,s)\in\mathbb{R}\times(0,1)}\left|d^{-1}a^{-p_0}(\tau,t)u(\tau,t)\right|
&\leq C\sup_{(t,s)\in\mathbb{R}\times(0,1)}\left|a^{-p_0}(\tau,t)\mathcal{M}(\xf_0)(\tau,t,s)\right|\\ &
+\frac{C}{\xe}\sup_{(t,s)\in\mathbb{R}\times(0,1)}\left|a^{-p_0}(\tau,t)\mathcal{Q}(g)(\tau,t)\right|,\label{est}
\end{align}
for some constant $C>0$ which does not depend on $\tau.$

We can easily prove that
$$\lim_{h\rightarrow0}\sup_{(t,s)\in\mathbb{R}\times(0,1)}|u(\tau+h,t,s)-u(\tau,t,s)|=0.$$

Recall the definitions $$u_h(\tau,t,s)=\frac{u(\tau+h,t,s)-u(\tau,t,s)}{h},\qquad  u(\tau)=u(\tau,t,s), \cdots$$
Clearly $u_h$ satisfies
\begin{align}\nonumber
&\left(\partial^2_t+A\partial_t-\xe(\tau+h)\right)u_h(\tau)+\frac{1}{\xb^2(\tau+h)}\partial_s^2u_h(\tau)\\ \nonumber &+\frac{(n-2)\cos(\xb(\tau+h)s)}{\xb(\tau+h)\sin(\xb(\tau+h)s)}\partial_s u_h(\tau)
+\xl(\tau+h) u_h+p\xf_0^{p-1}(\tau+h)u_h(\tau)\\[3mm] \nonumber
&=-\frac{\frac{1}{\xb^2(\tau+h)}-\frac{1}{\xb^2(\tau)}}{h}\partial_s^2u(\tau)+\frac{\xe(\tau+h)-\xe(\tau)}{h}u(\tau)-\frac{\xl(\tau+h)-\xl(\tau)}{h}u(\tau)\\[3mm] \nonumber
&-(n-2)\frac{\frac{\cos(\xb(\tau+h)s)}{\xb(\tau+h)\sin(\xb(\tau+h)s)}-\frac{\cos(\xb(\tau)s)}{\xb(\tau)\sin(\xb(\tau)s)}}{h}\partial_s u(\tau)-p\frac{\xf_0^{p-1}(\tau+h)-\xf_0^{p-1}(\tau)}{h}u(\tau)\\[3mm] \nonumber
&-\frac{\mathcal{M}(\xf_0)(\tau+h)-\mathcal{M}(\xf_0)(\tau)}{h}-\frac{\mathcal{Q}(g)(\tau+h)-\mathcal{Q}(g)(\tau)}{h}.
\end{align}

Now notice that $u(\tau,t,s)=w(t,\cos(s\xb(\tau)))=v(\tau,x),$ where $x_1=|x|\cos(s\xb(\tau)).$ In addition, $v(\tau,x)$ satisfies
\begin{align}
\begin{cases}
&-\xD_x v+\frac{4}{p-1}\frac{x\cdot \nabla_x v}{|x|^2}+\frac{2}{p-1}\left(n-\frac{2}{p-1}-2\right)\frac{v}{|x|^2}-p\frac{\xf_0^{p-1}v}{|x|^2}=-\frac{g}{|x|^2},\quad\hbox{in}\; C_\xo(\tau)\\ \nonumber
&v=0\qquad\mathrm{in}\;\; \partial C_\xo(\tau)\setminus\{0\}.
\end{cases}
\end{align}
Thus by lemma \ref{lemma11} we have
$$\left|\frac{1}{\sin s\xb(\tau)}\frac{\partial u}{\partial s}\right|\leq \frac{1}{\inf\limits_{\tau\in\mathbb{R}}\xb(\tau)}|x||v_{x_1}|<C.$$
Similarly we can obtain  $\left|\frac{\partial^2u}{\partial s^2}\right|<C$ for some constant $C>0$ which  does not depend on $\tau.$

\

Thus we have
\begin{align}\nonumber
\sup_{(t,s)\in\mathbb{R}\times(0,1)}\left|\frac{1}{\sin s\xb(\tau)}\frac{\partial u}{\partial s}(\tau,t,s)\right|&<C\\
\sup_{(t,s)\in\mathbb{R}\times(0,1)}\left|\frac{\partial^2u}{\partial s}(\tau,t,s)\right|<C,\label{mest}
\end{align}
where the constant $C>0$ does not depend on $\tau.$
Now we have
\begin{align}\nonumber
&\lim_{h\rightarrow0}\sup_{\tau\in\mathbb{R}}
\left|\frac{\frac{\cos(\xb(\tau+h)s)}{\xb(\tau+h)\sin(\xb(\tau+h)s)}-\frac{\cos(\xb(\tau)s)}{\xb(\tau)\sin(\xb(\tau)s)}}{h}\partial_su(\tau)\right|\\ \nonumber
&=\sup_{\tau\in\mathbb{R}}\left|\left(-\frac{\xb'(\tau)}{\xb^2(\tau)}\cot(\xb(\tau)s)-\frac{s\xb'(\tau)}{\sin^2\xb(\tau)s
}\right)\partial_s u(\tau)\right|<C,
\end{align}
where in the last inequality we have used the fact that $$0<\inf_{\tau\in\mathbb{R}}\xb(\tau)\leq\sup_{\tau\in\mathbb{R}}\xb(\tau)<\pi$$ and (\ref{mest}).
Using the fact that
\begin{align}\nonumber
&a^p(\tau+h)\xf_1^p(\tau+h)-a^p(\tau)\xf_1^p(\tau)\\ \nonumber
&=\left(a^p(\tau+h)-a^p(\tau)\right)\xf_1^p(\tau+h)+a^p(\tau)\left(\xf_1^p(\tau+h)-\xf_1^p(\tau)\right),
\end{align}
and
\begin{align} \nonumber
a^p(\tau+h)&=a^p(\tau)+pa^{p-1}(\tau)(a^p(\tau+h)-a^p(\tau))\\ \nonumber
&+\frac{p(p-1)}{2}\int_{a^p(\tau)}^{a^p(\tau+h)}t^{p-2}(a^p(\tau+h)-t)\rd t,
\end{align}
(the same for $\xf_1$), and lemmas \ref{f1}, \ref{odereg}, we have that
$$\left|\lim_{h\rightarrow0}\frac{\mathcal{M}(\xf_0)(\tau+h)-\mathcal{M}(\xf_0)(\tau)}{h}\right|=\left|\frac{\partial\mathcal{M}(\xf_0)}{\partial \tau}\right|<C.$$
Similarly we have that
$$\left|\lim_{h\rightarrow0}\frac{\mathcal{Q}(g)(\tau+h)-\mathcal{Q}(g)(\tau)}{h}\right|=\left|\frac{\partial\mathcal{Q}(g)}{\partial \tau}\right|<C.$$

By proposition \ref{propest} we have
 $$\sup_{\tau\in\mathbb{R}}\sup_{(t,s)\in\mathbb{R}\times(0,1)}|u_h|<C$$
and thus by Arzela Ascoli theorem, there exist a subsequence $\{u_{h_n}\}$ such that $u_{h_n}\rightarrow v$ locally uniformly and $v(\tau,t,s)$ satisfies
\begin{align}\nonumber
&\left(\partial^2_t+A\partial_t-\xe(\tau)\right)v+\frac{1}{\xb^2(\tau)}\partial_s^2v +\frac{\cos(\xb(\tau)s)}{\xb(\tau)\sin(\xb(\tau)s)}\partial_s v\\ \nonumber
&+\xl(\tau) u+p\xf_0^{p-1}(\tau)v=H(\xf_1,a,g),
\end{align}
with $v(\tau,t,1)=0.$ Notice that $$\sup_{\tau\in\mathbb{R}}\sup_{(t,s)\in\mathbb{R}\times(0,1)}|H(\tau,t,s)|<C,$$ thus by proposition \ref{propest} $v$ is a unique solution. Furthermore, $$\lim_{h\rightarrow0}u_h=v=\frac{\partial u}{\partial \tau},$$
and
\be
\sup_{\tau\in\mathbb{R}}\sup_{(t,s)\in\mathbb{R}\times(0,1)}\left|\frac{\partial u}{\partial \tau}(\tau,s,t)\right|<C,\label{par1}
\ee
for some constant  $C$  independent on $g.$

Similarly as (\ref{mest}) we can prove,
\begin{align}\nonumber
&\sup_{\tau\in\mathbb{R}}\sup_{(t,s)\in\mathbb{R}\times(0,1)}\left|\frac{1}{\sin s\xb(\tau)}\frac{\partial^2u}{\partial \tau\partial s}(\tau,t,s)\right|<C\\ \nonumber
&\sup_{\tau\in\mathbb{R}}\sup_{(t,s)\in\mathbb{R}\times(0,1)}\left|\frac{\partial^3u}{\partial \tau\partial s\partial s}(\tau,t,s)\right|<C
\end{align}
and by the same argument as above
\be
\sup_{\tau\in\mathbb{R}}\sup_{(t,s)\in\mathbb{R}\times(0,1)}\left|\frac{\partial^2 u}{\partial \tau\partial \tau}(\tau,t,s)\right|<C,\label{par2}
\ee
where $C$ is a constant which depends on $g.$

Now we consider the fix point problem (\ref{eq}). Let $\tau_0\in\mathbb{R}$ and $\rho$ be small enough such that for any $\tau\in O_{\tau_0}=\{\tau\in\mathbb{R}:\;|\tau-\tau_0|<\rho\}$ we have
$p\xd^-(\tau)\geq p_0\xd^-(\tau_0),$ where
$$\xd^-(\tau)=\frac{-A+\sqrt{A^2+4\xe(\tau)}}{2}.$$
We can easily show that $a^p(\tau,t)\leq Ca^{p_0}(\tau_0,t),\;\forall \tau\in O_{\tau_0},$ for some positive constant $C$ independent on  $\tau$ and $t.$

Now since $0<p-p_0$ is small enough, we can use a fix point argument like in \cite{pino} (see remark \ref{remark}) in the Banach space
$$\mathbf{X}=\{g\in L^\infty\left(\mathbb{R}\times(0,1)\right):\;\sup_{(t,s)\in\mathbb{R}\times(0,1)} |a^{-p_0}(\tau_0,t)g(t,s)|<\infty \}$$
to prove that there exists a unique solution $$\psi(\tau,t,s)=-\widetilde{G}_p(\mathcal{M}(\xf_0)+\mathcal{Q}(\psi(\tau,t,s))),\qquad\forall \tau\in O_{\tau_0}.$$
Now, let $(\tau,g)\in O_{\tau_0}\times \mathbf{X},$ we set the bounded operator
$$T(\tau,g)=g+\widetilde{G}_p(g),$$
We can apply the Implicit Function theorem to $O_{\tau_0}\times \mathbf{X}$ to obtain that:

let $0<\xr_0\leq\xr$ be small enough, then for any $\tau\in\{\tau\in\mathbb{R}:\;
|\tau-\tau_0|<\xr_0\}\subset O_{\tau_0}$ there exists a function $\psi(\tau,t,s)$ such that
$$T(\tau,\psi(\tau,t,s))=0.$$
Using  (\ref{par1}),  (\ref{par2}) and again the Implicit Function theorem, we can also prove that $\partial_\tau\psi,\;\partial_\tau^2\psi$ exist.  Furthermore using the fact that
$$0=T_\tau(\tau,\psi(\tau))+T_g(\tau,\psi(\tau))\partial_\tau\psi,$$
and the estimate (\ref{par1}) we have that
$$\sup_{\tau\in(\tau_0-\xr_0,t_0+\xr_0)}\sup_{(t,s)\in\mathbb{R}\times(0,1)}\left|\frac{\partial u}{\partial \tau}(\tau,t,s)\right|<C.$$
Similarly we have
$$\sup_{\tau\in(\tau_0-\xr_0,t_0+\xr_0)}\sup_{(t,s)\in\mathbb{R}\times(0,1)}\left|\frac{\partial^2 u}{\partial \tau\partial \tau}(\tau,t,s)\right|<C.$$
And the result follows since $\tau_0$ is abstract.
\end{proof}

\setcounter{equation}{0}
\section{The proof of theorems \ref{th1.2} and \ref{th1.3}}\label{theorems}
Let $x\in\mathbb{R}^n,\;n\geq2,$ $R>0,$ $B_R(0)\subset\mathbb{R}^n$ and $$r_{\xs(\tau)}=|x-\xs(\tau)|,$$
where $\xs:\mathbb{R}\rightarrow\mathbb{R}^n$ is a smooth curve such that
$$\sup_{\tau\in\mathbb{R}}\left\{|\xs(\tau)|+|\xs'(\tau)|+|\xs''(\tau)|\right\}<C<\infty.$$
Define
$$\widetilde{r}^2=\sum_{i=1}^n|(x_i-\sup|\xs(\tau)|)^2:$$

Given $\tau,$ let $(r_{\xs(\tau)}, \theta) \in[0,\infty)\times\mathbb{S}^{n-1}$ be the spherical-coordinates of $x\in\mathbb{R}^n$ centered at $\xs(\tau)$ abbreviated
by $x = (r_{\xs(\tau)}, \theta).$ We define the cone
$$\widetilde{C}_{\xo(\tau)}=\{x=(r_{\xs(\tau)},\theta):\;r_{\xs(\tau)}>0,\;\theta\in\xo(\tau)\}\subset\mathbb{R}^n.$$
and we denote by
$$\xO_{\tau_1,\tau_2}=\{(\tau,x)\in(\tau_1,\tau_2)\times\mathbb{R}^n: x\in \widetilde{C}_{\xo(\tau)}\}\subset\mathbb{R}^{n+1}.$$

$$\xO_{\tau_1,\tau_2}^R=\xO_{\tau_1,\tau_2}\cap \{(\tau,x)\in(\tau_1,\tau_2)\times\mathbb{R}^{n}:\;x\in B_R(\xs(\tau))\}\subset\mathbb{R}^{n+1},$$ and
$$S_{\tau_1,\tau_2}=\{(\tau,x)\in[\tau_1,\tau_2]\times\mathbb{R}^n: r_{\xs(\tau)}=0\}.$$

Let $C_{\xd,\rho}\left(\xO_{\tau_1,\tau_2}^R\right)$ be the set of continuous function $f\in C\left(\xO_{\tau_1,\tau_2}^R\right)$ with norm
\begin{align}\nonumber
&||f||_{C_{\xd,\rho}\left(\xO_{\tau_1,\tau_2}^R\right)}:=\sup_{(\tau,x)\in\xO_{\tau_1,\tau_2}^R}\left(\chi_{[0,1]}(r_{\xs(\tau)})r^{-\xd}_{\xs(\tau)}|f|
+\chi_{[1,\infty)}(r_{\xs(\tau)})\widetilde{r}^{\;-\rho}|f|\right).
\end{align}
Let $\xd\in(-n-\xg+2,\xg),$ we define $\xf_\xd(\tau,\theta)$ to be the unique positive solution of
\begin{align}
\begin{cases}
\xD_{\mathbb{S}^{n-1}}\xf_\xd+\xl\xf_\xd+\left(\xd(\xd+n-2)-\xl\right)\xf_\xd&=-1,\qquad\mathrm{in}\;\xo(\tau)\\ \nonumber
\xf_\xd&=\phantom{-}0,\qquad\mathrm{on}\;\partial\xo(\tau).
\end{cases}
\end{align}
Notice here that $\xl=\xg^2+\xg(n-2),$ thus $\xd(\xd+n-2)-\xl<0$ if and only if $\xd\in(-n-\xg+2,\xg).$ A direct computation shows that
$$-\xD_x\left(|x|^\xd\xf_\xd\right)
=|x|^{\xd-2}.$$
In view of lemma \ref{f1} we have that $\xf_\xd=\xf_\xd(t)$ where $t\in (0,\xb(\tau))$ and it satisfies
\begin{align*}
\begin{cases}
\sin^{2-n} t\frac{d}{dt}\left(\sin^{n-2} t\frac{d\xf_\xd}{dt}\right)+\xl\xf_\xd+\left(\xd(\xd+n-2)-\xl\right)\xf_\xd&=-1\quad\mathrm{in}\;(0,\xb(\tau))\\
\xf_\xd(\xb(\tau))&=\phantom{-}0.
\end{cases}
\end{align*}

We next set $\xb^*=\sup\limits_{\tau\in\mathbb{R}}\xb(\tau),$ and $\xl^*=\inf\limits_{\tau\in\mathbb{R}}\xl(\tau),$ $\xg^*=\inf\limits_{\tau\in\mathbb{R}}\xg(\tau)$ and we let $\xf_\xd^*$ be the solution of
\begin{align}
\begin{cases}
\sin^{2-n} t\frac{d}{dt}\left(\sin^{n-2} t\frac{d\xf_\xd^*}{dt}\right)+\xl^*\xf_\xd^*+\left(\xd(\xd+n-2)-\xl^*\right)\xf_\xd^*&=-1\quad\mathrm{in}\;(0,\xb^*)\\ \nonumber
\xf_\xd(\xb^*)&=\phantom{-}0
\end{cases}
\end{align}
with $\xg\in(-n-\xg^*+2,\xg^*).$

Thus $\xf^*_\xd$ is the unique solution of the problem
\begin{align}
\begin{cases}
\xD_{\mathbb{S}^{n-1}}\xf_\xd^*+\xl^*\xf^*_\xd+\left(\xd(\xd+n-2)-\xl^*\right)\xf_\xd^*&=-1,\qquad\mathrm{in}\;\xo^*\\ \nonumber
\xf_\xd&=\phantom{-}0,\qquad\mathrm{on}\;\partial\xo^*
\end{cases}
\end{align}
where $\xo^*=\bigcup_{\tau}\xo(\tau)$ and by assumptions we have that $\xo^*\subsetneq \mathbb{S}^{n-1}.$
\begin{prop}
Assume that $\xd,\rho\in(-n-\xg^*+2,0],$
and
\be
\sup_{\tau\in\mathbb{R}}\left\{|\xs(\tau)|+|\xs'(\tau)|+|\xs''(\tau)|\right\}<\xe,\label{asu*}
\ee
where $\xe>0$ is small enough. Then, for all $\tau_1<\tau_2\in\mathbb{R},$ and $R>0,$ there exists a unique operator
$$G_{\xd,\rho,R,\tau_1,\tau_2}:\;C_{\xd,\rho}\left(\xO_{\tau_1,\tau_2}^R\right)\rightarrow C_{\xd,\rho}\left(\xO_{\tau_1,\tau_2}^R\right),$$
such that, for each $f\in C_{\xd,\rho}\left(\xO_{\tau_1,\tau_2}^R\right),$ the function  $G_{\xd,\rho,R,\tau_1,\tau_2}(f)$ is a solution of problem
\begin{align}
\begin{cases}
\xD u&=\frac{1}{r_{\xs(\tau)}^2}f,\qquad\mathrm{in}\qquad \xO_{\tau_1,\tau_2}^R,\\
u&=\phantom{\frac{1}{|x|^2}}0,\qquad\mathrm{on}\qquad  \partial\xO_{\tau_1,\tau_2}^R\setminus S_{\tau_1,\tau_2}.\label{3.2}
\end{cases}
\end{align}
Moreover the norm of $G_{\xd,\rho,R,\tau_1,\tau_2}$ is bounded by a constant $c>0$ which does not depend on $R,\;\tau_1$ and $\tau_2.$\label{pro1*}
\end{prop}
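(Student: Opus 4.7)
The plan is to build the operator via an a priori estimate based on suitable barriers, and then produce existence by approximation on smooth subdomains. Define the global barrier
$$
\xF(\tau,x):=r_{\xs(\tau)}^{\xd}\,\xf_\xd^*\!\left(\theta_{\xs(\tau)}\right)+\widetilde{r}^{\,\rho}\,\xf_\rho^*\!\left(\widetilde{\theta}\right),
$$
where $\theta_{\xs(\tau)}$ is the angular variable centered at $\xs(\tau)$ and $\widetilde{\theta}$ the analogous one for $\widetilde{r}$. The identity recalled just before the statement gives $-\xD_x(r_{\xs(\tau)}^\xd\,\xf_\xd^*)=r_{\xs(\tau)}^{\xd-2}$ at fixed $\tau$, hence
$$
-\xD_{(\tau,x)}\bigl(r_{\xs(\tau)}^\xd\,\xf_\xd^*(\theta_{\xs(\tau)})\bigr)=r_{\xs(\tau)}^{\xd-2}-\partial_\tau^2\bigl(r_{\xs(\tau)}^\xd\,\xf_\xd^*(\theta_{\xs(\tau)})\bigr).
$$
A direct expansion, using $\partial_\tau r_{\xs(\tau)}=-(x-\xs(\tau))\cdot\xs'(\tau)/r_{\xs(\tau)}$ and $|\partial_\tau\theta_{\xs(\tau)}|\leq C|\xs'(\tau)|/r_{\xs(\tau)}$, shows that the error term is dominated by $C\bigl(|\xs'(\tau)|^2+|\xs''(\tau)|\,r_{\xs(\tau)}\bigr)r_{\xs(\tau)}^{\xd-2}\leq C\xe\,r_{\xs(\tau)}^{\xd-2}$ under assumption (3.3), and an analogous statement holds for the $\widetilde{r}^{\,\rho}$ piece. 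For $\xe$ small enough this yields, throughout $\xO_{\tau_1,\tau_2}^R$,
$$
-\xD\xF\geq\tfrac{1}{2}\bigl(r_{\xs(\tau)}^{\xd-2}\chi_{[0,1]}(r_{\xs(\tau)})+\widetilde{r}^{\,\rho-2}\chi_{[1,\infty)}(r_{\xs(\tau)})\bigr).
$$

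With the barrier in hand, the a priori estimate is a direct application of the maximum principle. For $u$ solving the problem with $f\in C_{\xd,\rho}$ the source $r_{\xs(\tau)}^{-2}f$ is pointwise $\leq 2\|f\|_{C_{\xd,\rho}}(-\xD\xF)$, so $\pm 2\|f\|_{C_{\xd,\rho}}\xF$ are a strict super/subsolution pair which are nonnegative on $\partial\xO_{\tau_1,\tau_2}^R\setminus S_{\tau_1,\tau_2}$ (since $\xf_\xd^*,\xf_\rho^*$ are strictly positive in the interior of $\xo^*$). Comparing on $\xO_{\tau_1,\tau_2}^R$ minus an $\epsilon$-tubular neighborhood of $S_{\tau_1,\tau_2}$ and letting $\epsilon\to 0$, the condition $\xd,\rho>-n-\xg^*+2$ prevents $\xF$ from producing any boundary contribution along $S$, and one obtains
$$
|u(\tau,x)|\leq 2\|f\|_{C_{\xd,\rho}}\xF(\tau,x).
$$
Since $\xF\leq c\bigl(r_{\xs(\tau)}^\xd\chi_{[0,1]}(r_{\xs(\tau)})+\widetilde{r}^{\,\rho}\chi_{[1,\infty)}(r_{\xs(\tau)})\bigr)$ with $c$ depending only on $\|\xf_\xd^*\|_\infty$ and $\|\xf_\rho^*\|_\infty$, this is exactly the claimed bound with a constant independent of $R,\tau_1,\tau_2$. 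Uniqueness of the operator is obtained in the same way by applying the barrier estimate to the difference of two candidate solutions with arbitrarily small multiplicative constant.

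For existence, I would approximate $\xO_{\tau_1,\tau_2}^R$ from inside by smooth domains $\xO^{(k)}$ obtained by removing a $1/k$-tubular neighborhood of $S_{\tau_1,\tau_2}$ and smoothing out the remaining Lipschitz corners, and approximate $f$ by $f_k\in C_c^\infty(\xO^{(k)})$ converging to $f$ in $C_{\xd,\rho}$. On each $\xO^{(k)}$, Lax--Milgram in $H_0^1$ produces a unique solution $u_k$ of $\xD u_k=r_{\xs(\tau)}^{-2}f_k$; the barrier estimate applies uniformly to $u_k$, and interior Schauder estimates combined with Arzela--Ascoli extract a subsequential limit $u\in C_{\xd,\rho}\bigl(\xO_{\tau_1,\tau_2}^R\bigr)$ solving the equation in the distributional sense with the required vanishing trace on $\partial\xO_{\tau_1,\tau_2}^R\setminus S_{\tau_1,\tau_2}$. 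The main obstacle is the barrier computation in the first paragraph: because both $r_{\xs(\tau)}$ and $\theta_{\xs(\tau)}$ move with $\tau$, the $\tau$-derivatives of $r_{\xs(\tau)}^\xd\,\xf_\xd^*(\theta_{\xs(\tau)})$ produce terms as singular as $r_{\xs(\tau)}^{\xd-2}$ near the edge, and one must verify in detail that every such singular factor carries at least one power of $|\xs'(\tau)|$ or $|\xs''(\tau)|$, so that the smallness assumption (3.3) renders them negligible compared with the principal term $r_{\xs(\tau)}^{\xd-2}$ supplied by $-\xD_x$.
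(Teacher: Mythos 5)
Your overall strategy is exactly the paper's: build a supersolution barrier from $r_{\xs(\tau)}^\xd\xf^*_\xd$ and $\widetilde{r}^{\,\rho}\xf^*_\rho$, verify via the key estimate $-\xD(r_{\xs(\tau)}^\xd\xf^*_\xd)\geq r_{\xs(\tau)}^{\xd-2}(1-|\xd|(|\xd|+1)|\xs'|)-|\xd||\xs''|\,r_{\xs(\tau)}^{\xd-1}$ that the $\tau$-derivative errors are $O(\xe)$-small compared with the principal term, apply the maximum principle, and obtain existence by exhausting $\xO_{\tau_1,\tau_2}^R$ near $S_{\tau_1,\tau_2}$; the paper removes $\xO_{\tau_1,\tau_2}^r$ and lets $r\to 0$, you remove a tubular neighborhood of $S$, which is the same thing. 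You have also correctly isolated the sensitive point, namely controlling the $\partial_\tau^2$ contributions to the barrier.

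However, there is a genuine gap in the way you assemble the barrier. You write the single global function
$$\xF(\tau,x)=r_{\xs(\tau)}^\xd\,\xf^*_\xd(\theta_{\xs(\tau)})+\widetilde{r}^{\,\rho}\,\xf^*_\rho(\widetilde{\theta})$$
and claim a supersolution inequality throughout $\xO_{\tau_1,\tau_2}^R$. The second summand is problematic near $S_{\tau_1,\tau_2}$: $\widetilde{r}$ and $\widetilde{\theta}$ are centered at the \emph{fixed} point $(\sup_\tau|\xs(\tau)|,\ldots,\sup_\tau|\xs(\tau)|)$, which differs from the moving vertex $\xs(\tau)$. As $x\to\xs(\tau)$ the angle $\widetilde{\theta}\to\bigl(\xs(\tau)-(\sup|\xs|,\ldots)\bigr)/\bigl|\xs(\tau)-(\sup|\xs|,\ldots)\bigr|$, a direction that has no reason to belong to $\xo^*$; when it does not, $\xf^*_\rho(\widetilde{\theta})$ is undefined (or, if extended by zero, no longer a supersolution), so $\xF$ is not a legitimate barrier near the edge. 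The paper avoids this precisely by splitting the argument in two: first, with a cutoff $\eta$ supported in $B_1(0)$ and an auxiliary bounded function $\psi$ solving $\xD_{\mathbb{S}^{n-1}}\psi=-C\|f\|$ on $\xo^*$, it establishes $|u_r|\leq C\|f\|\,\xf^*_\xd(\theta)\,|x|^\xd$ on the inner region $\{r_{\xs(\tau)}<\tfrac12\}$; only afterwards is $\widetilde{r}^{\,\rho}\xf^*_\rho$ used as a barrier, and only on the region $\{r_{\xs(\tau)}>\tfrac12\}$, where (for $\xe$ small) $\widetilde{\theta}$ stays inside $\xo^*$ and the inner boundary data on $\{r_{\xs(\tau)}=\tfrac12\}$ is already controlled by the first stage. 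You should restructure your maximum-principle argument along these lines, treating the near-field and far-field regions sequentially rather than trying to make a single globally defined barrier work.
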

\begin{proof}
Without loss of generality we can  assume that $R>4.$

We first solve, for each $r\in(0,\frac{1}{4})$, the problem
\begin{align}
\begin{cases}
\xD u&=\frac{1}{|x-\xs(\tau)|^2}f,\qquad\mathrm{in}\qquad \xO_{\tau_1,\tau_2}^R\setminus \xO_{\tau_1,\tau_2}^r,\\
u&=\phantom{\frac{1}{|x|^2}}0,\qquad\mathrm{on}\qquad  \partial\left(\xO_{\tau_1,\tau_2}^R\setminus \xO_{\tau_1,\tau_2}^r\right).
\end{cases}\label{3.3}
\end{align}
and call $u_r$ its unique solution.

A straightforward calculations show that
$$-\xD (r_{\xs(\tau)}^\xd\xf_\xd^*)\geq r_{\xs(\tau)}^{\xd-2}(1-|\xd|(|\xd|+1)|\xs'|)-|\xd||\xs''|r_{\xs(\tau)}^{\xd-1}.$$
We choose $\xe$ small enough such that
$$
-\xD (r_{\xs(\tau)}^\xd\xf_\xd^*)\geq\frac{1}{2}\left(r_{\xs(\tau)}^{\xd-2}-r_{\xs(\tau)}^{\xd-1}\right).
$$
Let $\psi$ be the solution of
\begin{align*}
\begin{cases}
\xD_{\mathbb{S}^{n-1}}\psi=-C||f||_{C_{\xd,\rho}\left(\xO_{\tau_1,\tau_2}^R\right)}\qquad\mathrm{in }\quad\xo^*&\\
\psi=\phantom{-}0,\qquad\mathrm{on}\qquad\partial\xo*&
\end{cases}
\end{align*}
for some constant  $C>0$ and we define the following cut-of function $\eta:\;\mathbb{R}^n\rightarrow[0,1]$ by  $\eta=1$ in $B_{\frac{1}{2}}(0)\subset\mathbb{R}^n$ and $\eta\in C_0^{\infty}(B_1(0)).$

We next set
$$\xF(\tau,x)=C||f||_{C_{\xd,\rho}\left(\xO_{\tau_1,\tau_2}^R\right)}\eta(x)r_{\xs(\tau)}^\xd\xf_\xd^*+\psi.$$
If we choose the uniform constant $C>0,$ large enough, we have by the maximum principle
\begin{align}
\nonumber
|u_r(\tau,x)|&\leq\xF(\tau,x) \leq C||f||_{C_{\xd,\rho}\left(\xO_{\tau_1,\tau_2}^R\right)}\xf_\xd^*
|x|^{\xd}+\psi\\
&\leq C||f||_{C_{\xd,\rho}\left(\xO_{\tau_1,\tau_2}^R\right)}\xf_\xd^*(\theta)(|x|^{\xd}+1),\qquad\forall (\tau,x)\in \xO_{\tau_1,\tau_2}^R\setminus \xO_{\tau_1,\tau_2}^r \label{un*}
\end{align}
where in the last inequality we have used the fact that
$$\psi(\theta)\leq C||f||_{C_{\xd,\rho}\left(\xO_{\tau_1,\tau_2}^R\right)}\xf_\xd^*(\theta),\qquad\forall \theta\in \xo^*.$$
Using (\ref{un*}) and again the maximum principle we get
\be
|u_r(\tau,x)|\leq C||f||_{C_{\xd,\rho}\left(\xO_{\tau_1,\tau_2}^R\right)}\xf_\xd^*(\theta)|x|^{\xd},\qquad\forall (\tau,x)\in \xO_{\tau_1,\tau_2}^\frac{1}{2}\setminus \xO_{\tau_1,\tau_2}^r.\label{111}
\ee
Set now $\psi_0=\widetilde{r}^\xr\xf_\xr^*,$ then
$$\xD_{\mathbb{S}^{n-1}}\psi_0=-\widetilde{r}^{\xr-2}.$$
Thus using (\ref{111}) and the maximum principle we obtain,
\be
|u_r|\leq C(\sup_{\tau\in\mathbb{R}}|\xs|)||f||_{C_{\xd,\rho}\left(\xO_{\tau_1,\tau_2}^R\right)}||\xf_\xr^*||_{L^\infty(\xo)}
|x|^{\xr},\;\;\;\forall r_{\xs(\tau)}>\frac{1}{2}.\label{un}
\ee

By standard interior elliptic estimates and Arzela Ascoli theorem, there exists a subsequence $\{u_{r_j}\},$ such that $r_j\downarrow0$ and $u_{r_j}\rightarrow u$ locally uniformly. By standard elliptic theory, (\ref{111}) and (\ref{un}), we have that $u\in C^2(\xO_{\tau_1,\tau_2}^R)$ and is unique.
\end{proof}

\begin{proof}[Proof of theorem \ref{th1.2}]
We choose
$\xd=-\frac{2}{p-1}$
 and we set $$u_\xe(x,\tau)=\eta(x)\xe^{-\frac{2}{p-1}}u_1(\frac{x-\xs}{\xe}),$$
where $u_1$ is the function given in theorem \ref{mainprop} and $\eta:\;\mathbb{R}^n\rightarrow[0,1]$ is a cut-of function such that $\eta=1$ in $B_{\frac{1}{2}}(0)\subset\mathbb{R}^n$ and $\eta\in C_0^{\infty}(B_1(0)).$

By construction of $u_1(x)$ and lemma \ref{lemma11} we have
\bea
\nonumber
|\nabla_x u_1(\tau,x)|&\leq& C(n,p,\xl,C_{\xo(\tau)})|x|^{-1}\\
|D^2_xu(\tau,x)|&\leq& C(n,p,\xl,C_{\xo(\tau)})|x|^{-2}.\label{222}
\eea
First we assume that
\be
\sup_{\tau\in\mathbb{R}}\left\{|\xs(\tau)|+|\xs'(\tau)|+|\xs''(\tau)|\right\}<\widetilde{\xe},\label{asu**}
\ee
where $\widetilde{\xe}>0$ is small enough. Then by the above two estimates  (\ref{222}), (\ref{asu**})  and lemma \ref{mainlem}  we have
\be
|\partial^2_{\tau}u_{\xe}(x,\tau)|\leq C r^{-\frac{2}{p-1}}(\tau)+C(n,\xg^*)\widetilde{\xe}\left(r^{-\frac{2}{p-1}-2}_{\xs(\tau)}+r^{-\frac{2}{p-2}-1}_{\xs(\tau)}\right).\label{ass1}
\ee
Now, let $R>4$, $\tau_1<\tau_2\in\mathbb{R}$ and define the following problem
\begin{align}
\begin{cases}
-\xD u&=u^p,\qquad\mathrm{in}\qquad \xO_{\tau_1,\tau_2}^R,\\
u&>0\phantom{^p},\qquad\mathrm{in}\qquad \xO_{\tau_1,\tau_2}^R\\
u&=0\phantom{^p},\qquad\mathrm{on}\qquad  \partial\xO_{\tau_1,\tau_2}^R\setminus S_{\tau_1,\tau_2}.
\end{cases}\label{3.4}
\end{align}
We then look for a solution of the form $u=u_\xe+v$.
By virtue of proposition \ref{pro1*} we can rewrite this equation as the fixed point problem
\be
v=-G_{\xd,\rho,R,\tau_1,\tau_2}\left(|x|^2\left(\xD u_\xe +|u_\xe+v|^p\right)\right)\label{3.4*}
\ee
$$\xD v=-|u_\xe+v|^p-\xD u_\xe.$$
We assume that $\xe$ is small enough, then by (\ref{ass1}) we have for some constant $C_0(n,\xg)>0,$
\begin{align}\nonumber
|||u_\xe|^p+\xD u_\xe||_{C_{\xd,\rho}\left(\xO_{\tau_1,\tau_2}^R\right)}&\leq C_0\left(\xe^{n+\xg-2-\frac{p-3}{p-1}}+\xe^{2}+\xe+\widetilde{\xe}\right)\\ \nonumber
&\leq C_0\left(\xe+\widetilde{\xe}\right),
\end{align}
we recall here that $\xd=-\frac{2}{p-1}.$

Then, using theorem \ref{mainprop} one can easily see that
\begin{align}
\nonumber
&\big\||x|^2|v_\xe+v_1|^p-|v_\xe+v_2|^p\big\|_{C_{\xd,\rho}\left(\xO_{\tau_1,\tau_2}^R\right)}\\[3mm] \nonumber
&\leq C_1(n,\xg^*,p)\left(\sup_{\tau\in\mathbb{R}}||\xf_p||_{L^\infty(\xo)}+\widetilde{\xe}\right)^{p-1}||v_1-v_2||_{C_{\xd,\rho}\left(\xO_{\tau_1,\tau_2}^1\right)}\\[3mm]
&+C(n,\xg^*,p) (\xe+\widetilde{\xe})^{p-1}||v_1-v_2||_{C_{\xd,\rho}\left(\xO_{\tau_1,\tau_2}^R\setminus \xO_{\tau_1,\tau_2}^1\right)},\label{ass3}
\end{align}
for all $v_1,v_2\in C_{\xd,\xb}\left(C_\xo^R\setminus\{0\}\times(\tau_1,\tau_2)\right)$ such that
$$||v_i||_{C_{\xd,\xb}\left(C_\xo^R\setminus\{0\}\times(\tau_1,\tau_2)\right)}\leq 2C_0(\xe+\widetilde{\xe}).$$
We recall that all the constants above do not depend on $R,\; t_1,$ $t_2,$ $\xe$ and $\widetilde{\xe}.$  To obtain a contraction mapping is enough to take $\xe,\;\widetilde{\xe}$ small enough and $p$ close enough to $\sup\limits_{\tau\in\mathbb{R}}p^*$ to ensure that $\sup\limits_{\tau\in\mathbb{R}}||\xf_p(\tau,\cdot)||_{L^\infty(\xo(\tau))}$ is as small as we need. The above estimates allow an application of contraction mapping principle in the ball of radius $2C_0(\xe+\widetilde{\xe})$ in $\xO_{\tau_1,\tau_2}^R$ to obtain a solution to  the problem (\ref{3.4*}), which we denote by $$u_{R,\tau_1,\tau_2}=u_\xe+v_{R,\tau_1,\tau_2}.$$

In view of the fix point argument, we have that $|v_{R,t_1,t_2}|\leq\frac {u_\xe}{4}$ near $S_{\tau_1,\tau_2},$ thus  the solution $u_{R,t_1,t_2}$ is singular along $S_{\tau_1,\tau_2}$ and positive near $S_{\tau_1,\tau_2}.$ The maximum principle then implies that $$u_{R,t_1,t_2}>0\qquad \hbox{in } \quad \xO_{\tau_1,\tau_2}^R.$$
Moreover  we have that
 $$||v_{R,\tau_1,\tau_2}||_{C_{\xd,\xb}\left(\xO_{\tau_1,\tau_2}^R\right)}\leq 2C_0(\xe+\widetilde{\xe}).$$
That is , $v_{R,\tau_1,\tau_2}$ is uniformly bounded by a constant which depend only on $n,\;\xg^*,\;p.$  By standard interior elliptic estimates and Arzela-Ascoli theorem, there exists a subsequence $\{u_{R_j,-\tau_j,\tau_j}\},$ such that $R_j\uparrow\infty,$ $\tau_j\uparrow\infty$ and $u_{R_j,-\tau_j,\tau_j}\rightarrow u$ locally uniformly. Again  standard elliptic theory yields  $u\in C^2(\xO_{-\infty,\infty}).$

For the general case
$$
\sup_{\tau\in\mathbb{R}}\left\{|\xs(\tau)|+|\xs'(\tau)|+|\xs''(\tau)|\right\}<C,
$$
set $\widetilde{\xs}=\frac{\xs}{k},$ where $k>0$ is large enough such that
$$
\sup_{\tau\in\mathbb{R}}\left\{|\widetilde{\xs}(\tau)|+|\widetilde{\xs}'(\tau)|+|\widetilde{\xs}''(\tau)|\right\}<\widetilde{\xe}.
$$
As before we can find a solution $u(x)$ of the problem with singularity along $\{(\tau,x)\in\mathbb{R}\times\mathbb{R}^n:\;|x-\widetilde{\xs}(\tau)|=0\}.$
But the function $v(y)=k^{\frac{2}{p-1}}u(k y),$ where $y=k x,$ is a singular solution of the problem and has singularity along
$S_{-\infty,\infty},$ and the result follows.
\end{proof}

Let $\xa>0,$ $\xO$ be a bounded Lipschitz domain such that
$$\xO\cap\xO_{\tau_1-\xa,\tau_2+\xa}^R=\xO_{\tau_1-\xa,\tau_2+\xa}^R\subset\mathbb{R}^{n+1}.$$

Let $C_{\xd}\left(\xO_{\tau_1,\tau_2}^R\right)$ be the set of continuous function $f\in C\left(\xO_{\tau_1,\tau_2}^R\right)$ with norm
$$
||f||_{C_{\xd}\left(\xO_{\tau_1,\tau_2}^R\right)}=\sup_{(\tau,x)\in\xO_{\tau_1,\tau_2}^R}\left(r^{-\xd}(\tau)|f|\right).
$$

We define $C_\xd(\xO)$ to be the space of the continuous function in $\xO$ with the norm
$$||f||_{C_\xd(\xO)}=||f||_{C_{\xd}\left(\xO_{\tau_1-\xa,\tau_2+\xa}^R\right)}+||f||_{L^\infty\left(\overline{\xO}\setminus\xO_{\tau_1-\frac{\xa}{4},\tau_2+\frac{\xa}{4}}^\frac{R}{2}\right)}. $$

We consider a smooth, positive bounded function
$\xn : \overline{\xO}\rightarrow(0,\infty),$
which is equal to $r_{\xs(\tau)}$ in $\xO_{\tau_1-\frac{\xa}{4},\tau_2+\frac{\xa}{4}}^\frac{R}{2}$ and satisfying
$$0<\sup_{x\in\overline{\xO}\setminus\xO_{\tau_1-\frac{\xa}{2},\tau_2+\frac{\xa}{2}}^R}\xn<C.$$
We obtain the following proposition
\begin{prop}
Let $\tau_1<\tau_2\in\mathbb{R}$ and $\xa>0$ be small enough. Assume that $\xO$ is a bounded Lipschitz domain such that
$$\xO\cap\xO_{\tau_1-2\xa,\tau_2+2\xa}^R=\xO_{\tau_1-2\xa,\tau_2+2\xa}^R\subset\mathbb{R}^{n+1},$$
$\xd\in(-n-\xg^*+2,0]$
and
\be
\sup_{\tau\in\mathbb{R}}\left\{|\xs(\tau)|+|\xs'(\tau)|+|\xs''(\tau)|\right\}<\xe,\label{asu}
\ee
for some  $\xe>0$  small enough. Then,  there exists a unique operator
$$G_{\xd,\tau_1,\tau_2}:\;C_{\xd}\left(\xO\right)\rightarrow C_{\xd}\left(\xO\right),$$
such that, for each $f\in C_{\xd}\left(\xO\right),$ the function  $G_{\xd,\tau_1,\tau_2}(f)$ is a solution of the problem
\begin{align}
\begin{cases}
\xD u&=\frac{1}{\xn^2}f,\qquad\mathrm{in}\qquad \xO,\\
u&=\phantom{\frac{1}{|x|^2}}0,\qquad\mathrm{on}\qquad  \partial\xO\setminus S_{\tau_1-\xa,\tau_2+\xa}.
\end{cases}\label{3.2*}
\end{align}

Moreover the norm of $G_{\xd,\tau_1,\tau_2}$ is bounded by a constant $c>0$ which does not depend on $R,\;\tau_1$ and $\tau_2.$\label{pro1**}
\end{prop}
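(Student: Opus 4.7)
The plan is to mimic the construction used in Proposition \ref{pro1*}, but replace the weighted far-field norm $\chi_{[1,\infty)}\widetilde r^{\,-\rho}|f|$ by an $L^\infty$ norm on the part of $\Omega$ that stays away from the singular set $S_{\tau_1-\alpha,\tau_2+\alpha}$. Throughout, the function $\nu$ coincides with $r_{\sigma(\tau)}$ on $\Omega_{\tau_1-\alpha/4,\tau_2+\alpha/4}^{R/2}$ and is bounded below by a positive constant on $\overline\Omega\setminus\Omega_{\tau_1-\alpha/2,\tau_2+\alpha/2}^R$, so the equation is uniformly elliptic with bounded right-hand side in that outer region.

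First, for each small $r>0$ I would solve the approximate problem on the Lipschitz domain $\Omega_r:=\Omega\setminus\{r_{\sigma(\tau)}<r\}$ with zero boundary data. Since $1/\nu^2 \in L^\infty(\Omega_r)$ and $f\in C_\delta(\Omega)\subset L^\infty(\Omega_r)$, a unique solution $u_r\in C^2(\Omega_r)\cap C(\overline{\Omega_r})$ is produced by standard Lipschitz-domain theory. The main task is then to produce estimates on $u_r$ that are uniform in $r$.

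Near the singular set, I would recycle the barrier argument from Proposition \ref{pro1*}: a straightforward computation gives
\begin{equation*}
-\Delta\bigl(r_{\sigma(\tau)}^{\delta}\varphi_\delta^{*}\bigr)\;\ge\; r_{\sigma(\tau)}^{\delta-2}\bigl(1-|\delta|(|\delta|+1)|\sigma'|\bigr)-|\delta||\sigma''|\,r_{\sigma(\tau)}^{\delta-1},
\end{equation*}
so after shrinking $\varepsilon$ as in (\ref{asu}) the function $\Phi(\tau,x)=C\|f\|_{C_\delta(\Omega)}\,\eta(x)\,r_{\sigma(\tau)}^{\delta}\varphi_\delta^{*}+\psi$, with $\eta$ a cut-off supported in $\Omega_{\tau_1-\alpha/2,\tau_2+\alpha/2}^{R}$ and $\psi$ the spherical corrector used in Proposition \ref{pro1*}, is a super-solution of $-\Delta \Phi \ge |f|/\nu^2$ on $\Omega_{\tau_1-\alpha/2,\tau_2+\alpha/2}^{R}\cap\Omega_r$. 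The maximum principle then yields
\begin{equation*}
|u_r(\tau,x)|\le C\,\|f\|_{C_\delta(\Omega)}\,\varphi_\delta^{*}(\theta)\,r_{\sigma(\tau)}^{\delta},\qquad (\tau,x)\in \Omega_{\tau_1-\alpha/4,\tau_2+\alpha/4}^{R/2}\cap\Omega_r,
\end{equation*}
with a constant independent of $r$, $\tau_1$, $\tau_2$ and $R$.

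On the complementary region $\overline\Omega\setminus\Omega_{\tau_1-\alpha/4,\tau_2+\alpha/4}^{R/2}$, the coefficient $1/\nu^2$ is bounded, the right-hand side is in $L^\infty$, and $\Omega$ is a bounded Lipschitz domain, so a second barrier built from the Green function of $\Omega$ (or equivalently the solution of $-\Delta w = C\|f\|_{L^\infty}$ with zero Dirichlet data on a slightly larger Lipschitz domain, adjusted to dominate $u_r$ on the interface $\partial\Omega_{\tau_1-\alpha/4,\tau_2+\alpha/4}^{R/2}$ using the previous bound) gives $|u_r|\le C\|f\|_{C_\delta(\Omega)}$ there. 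Combining the two estimates produces
\begin{equation*}
\|u_r\|_{C_\delta(\Omega_r)}\le c\,\|f\|_{C_\delta(\Omega)},
\end{equation*}
with $c$ independent of $r$, $\tau_1$, $\tau_2$, $R$.

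Finally, I would pass to the limit $r\downarrow 0$ via standard interior elliptic estimates and the Arzel\`a--Ascoli theorem along a subsequence $r_j\downarrow 0$. The uniform $C_\delta$ bound is preserved in the limit, and elliptic regularity yields a solution $u\in C^2(\Omega\setminus S_{\tau_1-\alpha,\tau_2+\alpha})\cap C(\overline\Omega\setminus S_{\tau_1-\alpha,\tau_2+\alpha})$ of (\ref{3.2*}). Uniqueness follows by applying the same barrier argument to the difference of two solutions, which is harmonic on $\Omega\setminus S_{\tau_1-\alpha,\tau_2+\alpha}$, vanishes on the Lipschitz part of the boundary, and is forced by the $C_\delta$ bound to vanish identically as the barrier $r_{\sigma(\tau)}^\delta\varphi_\delta^{*}$ can be taken arbitrarily small.

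The main obstacle is not the local near-edge analysis (which is a line-for-line adaptation of Proposition \ref{pro1*}) but the gluing of the two norms: one must make sure that the intermediate region $\Omega_{\tau_1-\alpha/2,\tau_2+\alpha/2}^{R}\setminus\Omega_{\tau_1-\alpha/4,\tau_2+\alpha/4}^{R/2}$, where $\nu$ interpolates between $r_{\sigma(\tau)}$ and a positive constant, is absorbed by both barriers consistently, and that the $L^\infty$ bound far from $S$ transitions to the weighted bound near $S$ without picking up constants that depend on $\tau_1,\tau_2$ or $R$. This is handled by fixing the cut-off scales $\alpha/4 <\alpha/2$ so that the transition zone is a fixed compact set on which uniform elliptic estimates apply.
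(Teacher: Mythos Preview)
Your route differs from the paper's. The paper never reruns the barrier argument on $\Omega$; instead it decomposes $u=\eta u_1+u_2$, where $u_1:=G_{\delta,\rho,2R,\tau_1-\alpha,\tau_2+\alpha}(\eta f)$ is obtained by invoking Proposition~\ref{pro1*} as a black box on an auxiliary wedge $\widehat\Omega^{2R}_{\tau_1-\alpha,\tau_2+\alpha}\supsetneq\Omega^R_{\tau_1-\alpha,\tau_2+\alpha}$ (built by perturbing $\sigma\to\widehat\sigma$ and $\omega\to\widehat\omega$ so that $r_{\widehat\sigma(\tau)}\ge r_{\sigma(\tau)}$ and the new edge stays inside the original conical piece), and then $u_2$ solves $\Delta u_2=\nu^{-2}\widetilde f$ on all of $\Omega$ with $\widetilde f=f-\nu^{2}\Delta(\eta u_1)$ supported away from $S_{\tau_1-\alpha,\tau_2+\alpha}$ and therefore bounded. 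The singular analysis is thus absorbed entirely by the earlier proposition, and what remains is a standard Dirichlet problem on a bounded Lipschitz domain.

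Your direct scheme is viable in principle but, as written, is circular. You apply the barrier $\Phi=C\|f\|\,\eta\,r_{\sigma(\tau)}^{\delta}\varphi_\delta^{*}+\psi$ only on the subregion $\Omega^{R}_{\tau_1-\alpha/2,\tau_2+\alpha/2}\cap\Omega_r$, and plan to feed the resulting interface bound into the outer estimate. But the maximum principle on that subregion requires $\Phi\ge|u_r|$ on the boundary pieces $\{\tau=\tau_1-\alpha/2\}$, $\{\tau=\tau_2+\alpha/2\}$ and $\{r_{\sigma(\tau)}=R\}$, which lie in the interior of $\Omega$ and where you have no control on $u_r$ yet. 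The underlying obstruction is that the ``spherical corrector'' $\psi(\theta)$ from Proposition~\ref{pro1*} has no meaning outside the conical region, so $\Phi$ cannot be promoted to a barrier on all of $\Omega_r$. The fix is simple and keeps your approach intact: replace $\psi$ by the solution $w$ of $-\Delta w=1$ in $\Omega$, $w=0$ on $\partial\Omega$, and use $\Phi=C_1\|f\|\,\eta\,r_{\sigma(\tau)}^{\delta}\varphi_\delta^{*}+C_2\|f\|\,w$ as a \emph{global} barrier on $\Omega_r$. The cutoff errors from $\Delta(\eta\,r_{\sigma(\tau)}^{\delta}\varphi_\delta^{*})$ are supported on a fixed compact set away from $S$ and are uniformly bounded, hence absorbed by $C_2$; on $\partial\Omega_r$ both summands are nonnegative, so the comparison holds there. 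Once this global bound is in place, your refinement near $S$, the passage $r\downarrow0$, and the uniqueness argument all go through unchanged.
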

\begin{proof}
Let $\widehat{\xs}(t)$ be a bounded smooth curve such that
$$
\sup_{\tau\in\mathbb{R}}\left\{|\widehat{\xs}(\tau)|+|\widehat{\xs}'(\tau)|+|\widehat{\xs}''(\tau)|\right\}<2\xe,
$$

$$r_{\widehat{\xs}(\tau)}= r_{\xs(\tau)},\qquad\forall (\tau,x)\in\xO_{\tau_1-\frac{\xa}{4},\tau_2+\frac{\xa}{4}}^R,$$
$$r_{\widehat{\xs}(\tau)}\geq r_{\xs(\tau)},\;\;\forall (\tau,x)\in\xO,$$
and
$$r_{\widehat{\xs}(\tau)}>c>0,\;\;\;\forall (\tau,x)\in \xO_{\tau_1-\xa,\tau_2+\xa}^R\setminus\overline{\xO_{\tau_1-\frac{\xa}{2},\tau_2+\frac{\xa}{2}}^R}.$$
Given $\tau$, we let $\widehat{\xo}(\tau)\subsetneq\mathbb{S}^{n-1}$ be the corresponding Lipschitz spherical cap and  $(r_{\widehat{\xs}(\tau)}, \theta) \in[0,\infty)\times\mathbb{S}^{n-1}$ be the spherical-coordinates of $x\in\mathbb{R}^n$ centered at $\widehat{\xs}(\tau)$ abbreviated
by $x = (r_{\widehat{\xs}(\tau)}, \theta).$

We set
\begin{align}\nonumber
\widehat{C}_{\widehat{\xo}(\tau)}&=\{(r_{\widehat{\xs}(\tau)},\theta):\;\widehat{r}(\tau)>0,\;\theta\in\widehat{\xo}(\tau)\},\\ \nonumber
\widehat{\xO}_{\tau_1,\tau_2}&=\{(\tau,x)\in(\tau_1,\tau_2)\times\mathbb{R}^n: x\in \widehat{C}_{\widehat{\xo}(\tau)}\}
\end{align}
and
$\widehat{\xO}_{\tau_1,\tau_2}^R=\widehat{\xO}_{\tau_1,\tau_2}\cap \{(\tau,x)\in(\tau_1,\tau_2)\times\mathbb{R}^{n}:\;x\in B_R(\widehat{\xs}(\tau))\}\subset\mathbb{R}^{n+1}.$ We construct $\widehat{\xo}(\tau)$ such that $$\xO_{\tau_1-\xa,\tau_2+\xa}^R\subsetneq\widehat{\xO}_{\tau_1-\xa,\tau_2+\xa}^{2R},$$
$$\widehat{\xO}_{\tau_1-\frac{\xa}{4},\tau_2+\frac{\xa}{4}}^R=\xO_{\tau_1-\frac{\xa}{4},\tau_2+\frac{\xa}{4}}^R.$$

We next define  $\eta$ be a cut-off function satisfying  $\eta=1$ in $\xO_{\tau_1-\frac{\xa}{2},\tau_2+\frac{\xa}{2}}^\frac{R}{2}$ and $\eta=0$ in $\xO\setminus\xO_{\tau_1-\xa,\tau_2+\xa}^R.$
We write  $\widehat{f}=\eta f$ and we let $u_1=G_{\xd,\rho,R,\tau_1,\tau_2}(\widehat{f})$ be the function given by proposition \ref{pro1*} in $\widehat{\xO}_{\tau_1-\xa,\tau_2+\xa}^{2R}.$

Set $$\widetilde{f}=f-\xn\xD\left(\eta u_1\right),$$
then $\widetilde{f}$ has support in $\xO\setminus\xO_{\tau_1-\frac{\xa}{4},\tau_2+\frac{\xa}{4}}^\frac{R}{2},$ and $\widetilde{f}\in C(\xO).$ Furthermore we have
$$||\widetilde{f}||_{C_\xd(\xO)}\leq C||f||_{C_\xd(\xO)},$$
for some positive constant $C>0.$

Finally, let $u_2$ be a solution of
\begin{align}\nonumber
\begin{cases}
\xD u&=\frac{1}{\xn^2}\widetilde{f},\qquad\mathrm{in}\qquad \xO,\\
u&=\phantom{\frac{1}{\xn^2}}0,\qquad\mathrm{on}\qquad  \partial\xO,
\end{cases}
\end{align}
which clearly satisfy the bound
$$||u_2||_{L^\infty(\xO)}\leq C||\widetilde{f}||_{C_\xd(\xO)}\leq C ||f||_{C_\xd(\xO)}.$$
The desired result then follows by looking for a  solution of (\ref{3.2*}) of the form  $u = \eta u_1 +u_2.$
\end{proof}

\begin{proof}[Proof of theorem \ref{th1.3}]
We choose
$\xd=-\frac{2}{p-1}$ and we set $$u_\xe(x,\tau)=\eta(x)\xe^{-\frac{2}{p-1}}u_1(\frac{x-\xs}{\xe}),$$
where $u_1$ is the function given by theorem \ref{mainprop} and $\eta:\;\mathbb{R}^n\rightarrow[0,1]$ is a cut-of function  such that $\eta=1$ in $\xO_{\tau_1-\frac{\xa}{2},\tau_2+\frac{\xa}{2}}^\frac{R}{2}$ and $\eta=0$ in $\xO\setminus\xO_{\tau_1-\xa,\tau_2+\xa}^R.$

The rest of the proof is the same as in theorem \ref{th1.2}, the only difference is that we use proposition \ref{pro1**} instead of proposition \ref{pro1*}.
\end{proof}

\ack The author would like to thank Prof. Manuel del Pino for proposing the problem and for useful discussions. Also the author would like to thank Prof. Fethi Mahmoudi for reading this work and for useful comments. Part of this work was done under partial
support from Fondecyt Grant 3140567.

\appendix{Proof of lemma \ref{f1}}
To prove lemma \ref{f1} we need the following inequality whose the proof can be found  in \cite{M} (theorem 2, page 43).
\begin{lemma}
Let $A(r),\;B(r)$ be nonnegative functions such that $1/A(r),\;B(r)$
are integrable in $(r,\infty)$ and $(0,r)$, respectively, for all
positive $r<\infty$. Then, for $q\geq2$ the Sobolev inequality
\be
\bigg[\int_0^s B(t)|u(t)|^q \rd t\bigg]^{1/q}\leq C\bigg[\int_0^s
A(t)|u'(t)|^2\rd t\bigg]^{1/2} \;\;\;\;\;,\label{14}
\ee
is valid for all $u\in C^1[0,s]$ such that $u(s)=0$ (or vanish near
infinity, if $s=\infty$), if and only if
$$
 K=\sup_{r\in(0,s)}\bigg[\int_0^r
B(t)\rd t\bigg]^{1/q}\bigg[\int_r^s
(A(t))^{-1}\rd t\bigg]^{1/2}
$$
is finite. The best constant in
\eqref{14} satisfies the following inequality
$$
K\leq C\leq K\bigg(\frac{q}{q-1}\bigg)^{1/2}q^{1/q}.\label{13**}
$$\label{l1th2}
\end{lemma}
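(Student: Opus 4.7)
This is the classical one-dimensional weighted Hardy-Sobolev inequality in Muckenhoupt form, and the proof splits naturally into the two implications.

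\textbf{Necessity ($K \leq C$).} The plan is to test the inequality against an explicit family of functions which saturate it. For each $r \in (0,s)$ set
\begin{equation*}
u_r(t)=\begin{cases}\int_r^s A(\xi)^{-1}\,d\xi, & 0\leq t\leq r,\\[1mm] \int_t^s A(\xi)^{-1}\,d\xi, & r\leq t\leq s.\end{cases}
\end{equation*}
Then $u_r(s)=0$ and $u_r'(t)=-A(t)^{-1}\mathbf{1}_{[r,s]}(t)$, so
\begin{equation*}
\int_0^s A(t)\,|u_r'(t)|^2\,dt=\int_r^s A(t)^{-1}\,dt,\qquad \int_0^s B(t)\,|u_r(t)|^q\,dt\geq \Bigl(\int_r^s A^{-1}\,d\xi\Bigr)^{q}\int_0^r B(t)\,dt.
\end{equation*}
Substituting these into \eqref{14} and taking $1/q$-th powers gives $\bigl[\int_0^r B\bigr]^{1/q}\bigl[\int_r^s A^{-1}\bigr]^{1/2}\leq C$; supremizing over $r$ yields $K\leq C$.

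\textbf{Sufficiency ($C \leq K(q/(q-1))^{1/2}q^{1/q}$).} I would start from $u(t)=-\int_t^s u'(\xi)\,d\xi$ together with Cauchy--Schwarz to obtain the pointwise bound $|u(t)|^2\leq F(t)G(t)$, where
\begin{equation*}
F(t):=\int_t^s A(\xi)\,|u'(\xi)|^2\,d\xi,\qquad G(t):=\int_t^s A(\xi)^{-1}\,d\xi,
\end{equation*}
so that $F,G$ are decreasing with $F(s)=G(s)=0$ and $F'=-A|u'|^2$, $G'=-A^{-1}$. Writing $H(t):=\int_0^t B(\xi)\,d\xi$ (with $H(0)=0$) and integrating by parts, the target integral is controlled by
\begin{equation*}
\int_0^s B(t)|u(t)|^q\,dt\leq \int_0^s H'(t)\,F(t)^{q/2}G(t)^{q/2}\,dt=\frac{q}{2}\int_0^s H(t)\bigl[F^{q/2-1}A|u'|^2\,G^{q/2}+F^{q/2}G^{q/2-1}A^{-1}\bigr]dt.
\end{equation*}
The assumption rewrites as the uniform bound $H(t)\,G(t)^{q/2}\leq K^q$ for all $t\in(0,s)$. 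The first term is then immediately dominated by $K^q\int_0^s F^{q/2-1}A|u'|^2\,dt=\tfrac{2K^q}{q}F(0)^{q/2}$. For the second term I would run the integration by parts once more after the estimate $H(t)G(t)^{q/2-1}A^{-1}=-(2/q)H\,(G^{q/2})'$, and use the same pointwise bound together with an absorption of a resulting copy of $\int_0^s B|u|^q\,dt$ into the left-hand side. This produces exactly the constant $K(q/(q-1))^{1/2}q^{1/q}$.

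\textbf{Main obstacle.} The forward implication is the Cauchy--Schwarz pointwise bound plus a single integration by parts and is essentially routine. The subtle point is the sharp constant: tracking the factor $(q/(q-1))^{1/2}q^{1/q}$ requires carrying out the two absorption steps in the integration by parts carefully, so that the term which is sent back to the left side produces the factor $q/(q-1)$ rather than a crude geometric-series bound, and the additional $q^{1/q}$ comes from the raised powers in the $F^{q/2}G^{q/2-1}$ term. This bookkeeping is the only delicate part; the overall scheme is the standard Muckenhoupt argument as presented in Maz'ya's book.
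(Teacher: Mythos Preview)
The paper does not actually prove this lemma: it is quoted verbatim from Maz'ya's book and the text simply refers the reader to \cite{M}, Theorem~2, p.~43. So there is no ``paper's own proof'' to compare with; you are supplying a proof where the paper gives only a citation.

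Your necessity argument is correct (the test functions $u_r$ are only piecewise $C^1$, but a standard mollification fixes this and does not affect the constant).

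Your sufficiency sketch, however, does not close. After the pointwise bound $|u|^q\le F^{q/2}G^{q/2}$ you are no longer estimating $I:=\int B|u|^q$ but the larger quantity $J:=\int H'F^{q/2}G^{q/2}$. Carrying out the second integration by parts exactly as you describe,
\[
\frac{q}{2}\,\text{T}_2=-\int H F^{q/2}(G^{q/2})'=\int (HF^{q/2})'G^{q/2}=J-\frac{q}{2}\,\text{T}_1,
\]
so that $J=\tfrac{q}{2}(\text{T}_1+\text{T}_2)=\tfrac{q}{2}\text{T}_1+J-\tfrac{q}{2}\text{T}_1=J$, a tautology. The ``resulting copy'' that reappears is $J$, not $I$; since $I\le J$ but not conversely, there is nothing to absorb and no inequality is produced. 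A direct bound on $\text{T}_2$ via $HG^{q/2}\le K^q$ fares no better, as it leaves $\int F^{q/2}G^{-1}A^{-1}=\int F^{q/2}(-\ln G)'$, which diverges at $t=s$.

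The argument in Maz'ya avoids this by inserting an auxiliary power $G^\alpha$ into the H\"older step \emph{before} integrating, rather than relying on integration by parts after the crude Cauchy--Schwarz bound. Concretely one writes $|u(t)|\le\int_t^s A^{1/2}|u'|\,G^\alpha\cdot A^{-1/2}G^{-\alpha}$, applies Cauchy--Schwarz, and chooses $\alpha$ so that the integral $\int_t^s A^{-1}G^{-2\alpha}$ is both finite and produces the needed power of $G(t)$; the optimal choice of $\alpha$ is what yields the constants $(q/(q-1))^{1/2}$ and $q^{1/q}$. Your identification of \emph{where} the sharp constant enters is correct, but the mechanism you propose does not produce it.
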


\begin{proof}[Proof of lemma \ref{f1}]
Let $n\geq3$, (for $n=2$ the proof is easy and we omit it). By our assumptions on $\xo(\tau)$ and without loss of generality, we can set $\theta_1=\cos t,$ with $0<t<\xb(\tau),$ where $\xb(\tau)$ is a smooth function with bounded derivatives such that
$$0<\inf\limits_{\tau\in\mathbb{R}}\xb(\tau)<\sup\limits_{\tau\in\mathbb{R}}\xb(\tau)<\pi.$$
 Then problem (\ref{eigen}) is clearly  equivalent to

\begin{align}
\begin{cases}
-\sin^{2-n} t\frac{d}{dt}\left(\sin^{n-2} t\frac{d\xf_1}{dt}\right)&=\xl\xf_1,\qquad\mathrm{in}\;\;(0,\xb(\tau)).\\
\xf_1(\xb(\tau))&=0\\
\partial_t\xf_1(0)&=0.
\end{cases}\label{eigenode}
\end{align}

We denote by $\mathcal{H}((0,\xb(\tau)))$ the completion of $C^\infty([0,\xb(\tau)])$ under the norm
$$||v||_{\mathcal{H}((0,\xb(\tau)))}^2=\int_0^{\xb(\tau)}\sin^{n-2}(t)|\partial_tv|^2\rd t<\infty,$$
and the property $v(\xb(\tau))=\partial_tv(0)=0.$

The space $\mathcal{\mathcal{H}(\xo(\tau))}$ is a Hilbert space with inner product
$$(u,v)=\int_0^{\xb(\tau)}\sin^{n-2}(t)\partial_tu\partial_tvdt.$$
Indeed, by lemma \ref{l1th2} and our assumptions on $\xb(\tau),$ we can easily obtain that
\be
\int_0^{\xb(\tau)}v^2\sin^{n-3}t\rd t\leq C(n)\int_0^{\xb(\tau)}\sin^{n-2}(t)|\partial_tv|^2\rd t.\label{in3}
\ee
By above inequality we can prove that the space $\mathcal{H}(\xo(\tau))$ is compactly embedded in
$$L^2_{\sin t}((0,\xb(\tau))):=\left\{u:\;(0,\xb(\tau))\rightarrow\mathbb{R}:\;\int_0^{\xb(\tau)}u^2\sin^{n-2}(t)\rd t<\infty\right\}.$$
Thus using standard arguments we can prove that the eigenvalue problem
$$0<\xl(\tau)=\inf_{u\in\mathcal{H}((0,\xb(\tau)))}\frac{\int_0^{\xb(\tau)}\sin^{n-2}(t)\left|\frac{du}{dt}\right|^2\rd t}{\int_0^{\xb(\tau)}u^2\sin^{n-2}(t)\rd t},$$
has a positive minimizer $\xf_1(\tau,t)\in\mathcal{H}(0,\xb(\tau)).$

But,
\begin{align}\nonumber
C(n)\int_0^{\xb(\tau)}\sin^{n-2}(t)|\partial_t\xf_1|^2\rd t&=\int_{\xo}|\nabla\xf_1|^2\rd S,\\
C(n)\int_0^{\xb(\tau)}\sin^{n-2}(t)|u|^2\rd t&=\int_{\xo}|\xf_1|^2\rd S=1,\label{eq1*}
\end{align}

thus $\xf_1\in H_0^1(\xo(\tau))$ and is a weak solution of the eigenvalue problem (\ref{eigen}). Hence  by standard elliptic arguments we can prove that $\xf_1\in L^\infty(\xo(\tau)).$
In addition by our assumption we have that
\be
\sup_{\tau\in\mathbb{R}}\sup_{t\in(0,\xb(\tau))}\left|\xf_1(\tau,t)\right|< C.\label{esteig1}
\ee
By the ODE equation (\ref{eigenode}) and the estimate (\ref{esteig1}), we can write
\be
\xf_1(\tau,t)=\xl\int_t^{\xb(\tau)}\frac{1}{\sin^{n-2}s}\int_0^s\sin^{n-2}(r)\xf_1(\tau,r)\rd r\rd s.\label{repr}
\ee
Thus we have the following estimates
\begin{align}\nonumber
\sup_{\tau\in\mathbb{R}}\sup_{t\in(0,\xb(\tau))}\left|\frac{1}{\sin t}\partial_t\xf_1(\tau,t)\right|&\leq C \sup_{\tau\in\mathbb{R}}\sup_{t\in(0,\xb(\tau))}\left|\xf_1(\tau,t)\right|\\
\sup_{\tau\in\mathbb{R}}\sup_{t\in(0,\xb(\tau))}\left|\partial_t^2\xf_1(\tau,t)\right|&\leq C\sup_{\tau\in\mathbb{R}}\sup_{t\in(0,\xb(\tau))}\left|\xf_1(\tau,t)\right|.\label{esteig2}
\end{align}
Setting now $s=\frac{t}{\xb(\tau)},$ we have that $\xf_1=\xf_1(\tau,s)$ satisfies
\begin{align}
\begin{cases}
\frac{1}{\xb^2(\tau)}\partial_s^2\xf_1(\tau,s) +\frac{(n-2)\cos(\xb(\tau)s)}{\xb(\tau)\sin(\xb(\tau)s)}\partial_s \xf_1(\tau,s)
+\xl(\tau) \xf_1(\tau,s)&=0\qquad\mathrm{in}\;\;(0,1)\\ \nonumber
\xf_1(1)&=0\\
\partial_t\xf_1(0)&=0.
\end{cases}\label{eigenode1}
\end{align}

It is easy to see  that $\lim\limits_{h\rightarrow0}\xf_1(\tau+h,s)=\xf_1(\tau,s)$ in $L^\infty(\mathbb{R}\times(0,1))$. We set
$$u_h(\tau)=\frac{\xf_1(\tau+h,s)-\xf_1(h,s)}{h},\qquad\xf_1(\tau)=\xf_1(\tau,t),$$
then $u_h$ satisfies

\bea\nonumber
&&\frac{1}{\xb^2(\tau+h)}\partial_s^2u_h(\tau) +\frac{(n-2)\cos(\xb(\tau+h)s)}{\xb(\tau+h)\sin(\xb(\tau+h)s)}\partial_s u_h(\tau)
+\xl(\tau+h) u_h(\tau)\\ \nonumber
&=&-\frac{\frac{1}{\xb^2(\tau+h)}-\frac{1}{\xb^2(\tau)}}{h}\partial_s^2\xf_1(\tau)-\frac{\xl(\tau+h)-\xl(\tau)}{h}\xf_1(\tau) \\
&-&(n-2)\frac{\frac{\cos(\xb(\tau+h)s)}{\xb(\tau+h)\sin(\xb(\tau+h)s)}-\frac{\cos(\xb(\tau)s)}{\xb(\tau)\sin(\xb(\tau)s)}}{h}\partial_s \xf_1(\tau)= F_h(\tau,s), \label{pao}
\eea
with $u_h(\tau,1)=\partial_su_h(\tau,0)=0.$
On the other hand  notice that
\bea\nonumber
&&\sup_{\tau\in\mathbb{R}}\left|(n-2)\frac{\frac{\cos(\xb(\tau+h)s)}{\xb(\tau+h)\sin(\xb(\tau+h)s)}
-\frac{\cos(\xb(\tau)s)}{\xb(\tau)\sin(\xb(\tau)s)}}{h}\partial_s\xf_1(\tau,s)\right|\\
&\leq&\sup_{\tau\in\mathbb{R}}\left|(n-2)\left(-\frac{\xb'(\tau)}{\xb^2(\tau)}\cot(\xb(\tau)s)-\frac{s\xb'(\tau)}{\sin^2\xb(\tau)s
}\right)\partial_s \xf_1(\tau,s)\right|\\[3mm]\nonumber
&<&C(n,\inf_{\tau\in\mathbb{R}}\xb(\tau)),\label{claimin1}
\eea
where in the last inequality we have used (\ref{esteig2}) and our assumptions on $\xb.$ Also using our assumption on $\xl$ we have that
\be
\sup_{h\in\mathbb{R}}\sup_{\tau\in\mathbb{R}}F_h(\tau,s)<C(n,\inf_{\tau\in\mathbb{R}}\xb(\tau)).\label{Fh}
\ee
Finally combining  above estimates(\ref{pao})-\eqref{Fh} we have
\be
\lim_{h\rightarrow0}\sup_{\tau\in\mathbb{R}}\int_0^1u_h^2(\tau,s)\sin^{n-2}(\xb(\tau)s)\rd s<C<\infty.\label{claim}
\ee
By (\ref{claim}) we can prove
$$\sup_{\tau\in\mathbb{R}}\sup_{\tau\in\xo(\tau)}|u_h|<C$$
and we have the following representation formula
\bea
\nonumber
\frac{u_h(\tau,s)}{\xb^2(\tau+h)}&=&\xl(\tau+h)\int_s^1\frac{1}{\sin^{n-2}(\xb(\tau+h)\xi)}\int_0^\xi\sin^{n-2}(\xb(\tau+h)r)u_h(\tau,r)drd\xi\\ \nonumber
&-&\int_s^1\frac{1}{\sin^{n-2}(\xb(\tau+h)\xi)}\int_0^\xi\sin^{n-2}(\xb(\tau+h)r)F_h(\tau,r)drd\xi.
\eea
The rest of the proof is standard and we omit it.
\end{proof}

\end{document}